\documentclass{birkau}

\usepackage{amssymb, latexsym, url}

\def\Om{\Omega}

\def\mb{\mathbb}
\def\mbf{\mathbf}

\newcommand{\mc}{\mathcal}
\newcommand{\mcV}{\mathcal{V}}
\newcommand{\mcK}{\mathcal{K}}

\newcommand{\mcQ}{\mathcal{Q}}
\newcommand{\mcT}{\mathcal{T}}

\newcommand{\mcW}{\mathcal{W}}
\newcommand{\mcS}{\mathcal{S}}
\newcommand{\mcL}{\mathcal{L}}

\def\wt{\widetilde}
\def\tV{\widetilde{\mcV}}
\def\tqV{\widetilde{\mcV}^q}

\def\tT{\widetilde{\mcT_t}}

\def\oT{\overline{\mcT_t}}

\def\mbf{\mathbf}

\def\mf{\mathsf}

\newcommand{\bfx}{\mathbf x}
\newcommand{\bfa}{\mathbf a}
\newcommand{\pro}[1]{{#1}^p}

\theoremstyle{plain}
\newtheorem{theorem}{Theorem}[section]
\newtheorem{corollary}[theorem]{Corollary}
\newtheorem{proposition}[theorem]{Proposition}
\newtheorem{lemma}[theorem]{Lemma}

\theoremstyle{definition}
\newtheorem{definition}[theorem]{Definition}
\newtheorem{remark}[theorem]{Remark}
\newtheorem{example}[theorem]{Example}
\newtheorem{problem}[theorem]{Problem}

\numberwithin{equation}{section}

\usepackage[dvipsnames]{color}

\begin{document}

\title[Semilattice sums of algebras]{Semilattice sums of algebras\\ and Mal'tsev products of varieties}

\corrauthor[C. Bergman]{C. Bergman}
\address{ Department of Mathematics\\
Iowa State University\\
Ames, Iowa, 50011, USA}
\email{cbergman@iastate.edu\phantom{,}}

\author[T. Penza]{T. Penza}
\address{Faculty of Mathematics and Information Science\\
Warsaw University of Technology\\
00-661 Warsaw, Poland}
\email{t.penza@mini.pw.edu.pl}

\author[A.B. Romanowska]{A.B. Romanowska}
\address{Faculty of Mathematics and Information Science\\
Warsaw University of Technology\\
00-661 Warsaw, Poland}
\email{aroman@mini.pw.edu.pl}

\thanks{Research of the first author was partially supported by the National Science Foundation under grant
no.~1500235.
The second author's research was supported by the Warsaw University of Technology under grant number 504/04259/1120.}

\keywords{Mal'tsev product of varieties, semilattice sums, prolongation, P\l onka sums,
  Lallement sums, regular and irregular identities, regularization and
  pseudo-regularization of a variety}

\subjclass{08B05, 08C15,  08A05}

\begin{abstract}
The Mal'tsev product of two varieties of similar
algebras is always a quasivariety. We consider the question of when
this quasivariety is a variety. The main result asserts that
if $\mcV$ is a strongly irregular variety with no nullary operations and at least one
non-unary operation, and $\mcS$
is the variety, of the same type as $\mcV$, equivalent to the variety of
semilattices, then the Mal'tsev product $\mcV \circ \mcS$ is a variety.
It consists precisely of semilattice sums of algebras in $\mcV$.
We derive an equational base for the product from an
equational base for $\mcV$. However, if
$\mcV$ is a regular variety, then the Mal'tsev product may not be a
variety. We discuss  various applications of the main result,
and examine some detailed representations of algebras in $\mcV
\circ \mcS$.
\end{abstract}

\maketitle

Let $\mc{K}$ be a quasivariety of $\Om$-algebras, and $\mc{Q}$ and $\mc{R}$, two of its
subquasivarieties. Assume additionally that $\mc{R}$-algebras are idempotent. Then the
\emph{Mal'tsev product, $\mc{Q} \circ_{\mc{K}} \mc{R}$, of $\mc{Q}$ and $\mc{R}$ relative
  to $\mc{K}$,} consists of $\mc{K}$-algebras $A$, with a congruence $\theta$, such that
$A/\theta$ is in $\mc{R}$, and each $\theta$-class $a/\theta$ is in $\mc{Q}$. Note that by
the idempotency of $\mc{R}$, each $\theta$-class is always a subalgebra of $A$. If
$\mc{K}$ is the variety of all $\Om$-algebras, then the Mal'tsev product
$\mc{Q} \circ_{\mcK} \mc{R}$ is called simply the \emph{Mal'tsev product} of $\mc{Q}$ and
$\mc{R}$, and is denoted by $\mc{Q} \circ \mc{R}$. It follows from results of  Mal'tsev
\cite{M67} that the  Mal'tsev product $\mc{Q} \circ_{\mc{K}} \mc{R}$ is a quasivariety.

In this paper we are interested in Mal'tsev products $\mcV \circ \mcS$ in which $\mcV$ is
a variety of $\Om$-algebras and $\mcS$ is the variety, of the same type as $\mcV$,
equivalent to the variety of semilattices.  A member of $\mcV \circ \mcS$ is a disjoint
union of $\mcV$-algebras over a homomorphic image, which has the structure of a
semilattice.  Such algebras are known as
semilattice sums of $\mcV$-algebras. The product $\mcV \circ \mcS$ is always a
quasivariety. However, until recently it was not known under what conditions it is a
variety. The main result of this paper (Theorem~\ref{T:varqvar}) asserts that if $\mcV$ is a
strongly irregular variety of a type with no nullary operations and at least one
non-unary operation, then the Mal'tsev product
$\mcV \circ \mcS$ is a variety. We also show how to build an equational base for this
variety from an equational base for~$\mcV$. In addition we provide an example showing that in the
case that $\mcV$ is  regular, $\mcV\circ \mcS$ does not need to be a variety.

Next we present examples of  (relative) Mal'tsev products which exhibit different aspects
of such products. In fact, two special examples inspired us to investigate semilattice
sums in general: one concerning certain semilattice sums of lattices and one involving
certain semilattice sums of Steiner quasigroups. (See Section~\ref{sec:examples}.)

As a converse process to the decomposition of an algebra $A$ from $\mcV \circ \mcS$ into
a semilattice sum of $\mcV$-algebras, some construction techniques are available to
recover an algebra $A$ from its summands and a semilattice quotient. Such techniques
include, for example, P\l onka sums, and the more general Lallement sums
\cite[Ch.~4]{RS02}. We recall some of the basic constructions of this type, and discuss
their applicability for representing algebras in the Mal'tsev product $\mcV \circ \mcS$.

In \cite{M67}, Mal'tsev proved that if $\mcK$ is a congruence permutable quasivariety with
a constant term whose value is always idempotent, then the relative Mal'tsev product of
any two subvarieties of $\mcK$ is a variety. This result was extended by Iskander
\cite{I84} to weakly congruence permutable varieties of algebras.

We begin with introductory information on semilattice sums. In
Sections~\ref{S:subvarieties} and~\ref{S:free} we provide some insight into the
quasivariety of semilattice sums of $\mcV$-algebras. In particular we construct, for any
variety $\mcV$, a supervariety $\pro{\mcV}$, called the \emph{prolongation of $\mcV$,}
containing $\mcV\circ \mcS$. We prove (Proposition~\ref{P:freetp}) that free
$\pro{\mcV}$-algebras lies in $\mcV \circ \mcS$.

The main result, Theorem~\ref{T:varqvar}, asserts that if $\mcV_t$ is a variety satisfying
an identity of the form $t(x,y)=x$ (i.e., a strongly irregular variety, see
Definition~\ref{defn:regular_id}), then $\mcV_t \circ \mcS = \pro{\mcV_t}$. As a
consequence, $\mcV_t \circ \mcS$ is again a variety. The proof is divided into two
parts. In Section~\ref{S:Tt} we treat the special case of the variety $\mcT_t$ of
$\Omega$-algebras defined by a single identity $t(x,y)=x$. Then in Section~\ref{S:Vt} we
leverage that observation to obtain the result for an arbitrary strongly irregular
variety. We also illustrate the necessity of strong irregularity with an example showing
that $\mcS \circ \mcS$ fails to be a variety.

Section~\ref{sec:examples} discusses a number of examples of Mal'tsev products and
relative Mal'tsev products. Relative products are often more interesting, as they may be
used to represent algebras in more familiar varieties.  The paper finishes with a
discussion concerning the detailed representation of algebras in the Mal'tsev product
$\mcV_t \circ \mcS$. Here a version of a construction of algebras called the Lallement sum
(\cite{RS85}) is used. In particular, certain very special Lallement sums provide a good
detailed representation for algebras in $\mcV_t \circ \mcS$, in the case when
$\mcV_t$-algebras have units.

We use notation and conventions similar to those of \cite{RS85, RS02}. For details and
further information concerning quasivarieties and Mal'tsev products of quasivarieties we
refer the reader to \cite{M67} and \cite{M71}, and then also to \cite{B18} and
\cite[Ch.~2]{RS02}; for universal algebra, see \cite{CB12} and \cite{RS02}; in particular,
for methods of constructing algebras as semilattice sums see \cite{PR92} and
\cite[Ch.~4]{RS02};  for semigroup theory, see \cite{CP61, H76, H95}; for
barycentric algebras and convex sets, we refer the reader to the monographs \cite{RS85,
  RS02}.

\section{Background}

Let $\tau\colon \Omega \rightarrow \mathbb{N}$ be a similarity type of $\Omega$-algebras with
no nullary operation symbols and containing at least one non-unary operation symbol. Such
a similarity type is called \emph{plural} \cite{RS02}. For a positive integer $n$, let
$T_{n}$ be the set of all $n$-ary $\Om$-terms containing precisely $n$ different
variables. They may be denoted $x_1, x_2, \dots x_n$ or by some other letters like $y$ or
$z$.

\begin{definition}\label{defn:regular_id}
  An identity is called \emph{regular} if precisely the same variables appear on both
  sides. At the opposite extreme, an identity of the form $t(x,y)=x$, for some $t\in T_2$
  is called \emph{strongly irregular.} A variety is regular if it satisfies only regular
  identities. It is strongly irregular if it satisfies at least one strongly irregular
  identity.
\end{definition}

Let $t\in T_2$. We denote by $\mcT_t$ the variety defined by the
single identity $t(x,y)=x$. We use the notation $\mcV_t$ to indicate a typical subvariety
of $\mcT_t$, that is, an arbitrary variety satisfying the strongly irregular identity
$t(x,y)=x$. We often find it convenient to write $x\cdot y$ (or $x\cdot_t y$) in place of
$t(x,y)$.

Let us note that the variety of  semilattices satifies all the regular identities
of the type with one binary operation. 

\begin{definition}
  The variety $\mcS$ of \emph{$\Omega$-semilattices} is defined to be the variety of
  $\Omega$-algebras satisfying all regular identities.
\end{definition}

It is fundamental to the subject that in a plural similarity type, $\mcS$ is the unique
variety that is equivalent to semilattices. Fix any $t\in T_2$ and write $x\cdot y$ in
place of $t(x,y)$. Then for any $S\in \mcS$, the algebra $\langle S,\cdot\rangle$ is a
semilattice. Conversely, for any basic $n$-ary operation symbol $\omega \in \Omega$ we
have that
\begin{equation}\label{E:sl}
x_1 \dots x_n \omega = x_1\cdot \dots \cdot x_n
\end{equation}
holds in $S$.

\begin{definition}
Let $\mcV$ be a variety of $\Omega$-algebras.
An $\Om$-algebra $A$ is a
\emph{semilattice sum} of $\mcV$-algebras if $A$ has a congruence $\varrho$ such that the
quotient $S =  A/\varrho$ is an $\Om$-semilattice (or briefly a semilattice) and every
congruence block is a $\mcV$-algebra.
\end{definition}

Note that since the quotient $A/\varrho$ is a semilattice, it is idempotent. Hence the
congruence blocks of $\varrho$ are always subalgebras, and $A$ is a disjoint sum of these
subalgebras. We will denote such a sum by $\bigsqcup_{s \in S} A_s$, where the $A_s$'s are
the congruence blocks of $\varrho$ and $S$ is (isomorphic to) $A/\varrho$. The class of
semilattice sums of $\mcV$-algebras forms the Mal'tsev product $\mcV \circ \mcS$ of the
varieties $\mcV$ and $\mcS$, and is known to be a quasivariety. (See Mal'tsev~\cite{M67,
  M71} as well as Bergman~\cite{B18} and also Romanowska, Smith~\cite[Ch.~3]{RS02}.)
Whenever $A$ is a semilattice sum of $\mcV$-algebras, it suffices to take $\varrho$ to be
the semilattice replica congruence of $A$, i.e., the smallest congruence of $A$ with the
corresponding quotient a semilattice. (See~\cite{B18}, just before Lemma~8.) Furthermore,
in the case that $\mcV$ is a strongly irregular variety, the semilattice replica
congruence of $A$ is the unique congruence $\varrho$ providing a decomposition of $A$ into
a semilattice sum of $\mcV$-subalgebras (see~\cite[\S 3.3]{RS02}).

In this paper we are especially interested in semilattice sums of $\mcT_t$-algebras. Since
$\mcT_t$ is a strongly irregular variety, each algebra $A$ in $\mcT_t \circ \mcS$
decomposes into a semilattice sum $\bigsqcup_{s \in S} A_s$ of $\mcT_t$-algebras $A_s$ in
a unique way over the quotient $S$ of $A$ given by its semilattice replica congruence.

The variety $\mcT_t$ contains other irregular varieties. In particular, if $\mcV_t$ is a
subvariety of $\mcT_t$, then it is known that $\mcV_t$ is defined by a set
$\Sigma_t$ consisting of a set $\Gamma_t$ of regular identities and the single identity
$t(x,y) = x$. (See e.g.~\cite{PR92} and \cite[Ch.~4]{RS02}.) The Mal'tsev product
$\mcV_t \circ \mcS$ consists of algebras which are semilattice sums of
$\mcV_t$-subalgebras. Moreover it contains $\mcV_t$ and $\mcS$ as subvarieties, and is
contained in the quasivariety $\mcT_t \circ \mcS$. The smallest variety
$\mathsf{V}(\mcV_t, \mcS)$ containing both $\mcV_t$ and $\mcS$ is called the
\emph{regularization} of $\mcV_t$ and is denoted by $\tV_t$.  (See
e.g.~\cite[Ch.~4]{RS02}.) The smallest quasivariety $\mathsf{Q}(\mcV_t, \mcS)$ containing
both $\mcV_t$ and $\mcS$ is called the \emph{quasi-regularization} of $\mcV_t$ and is
denoted by $\tqV_t$. (See \cite{BR96}.)  Note that $\mathsf{V}(\mcV_t, \mcS)$ and
$\mathsf{Q}(\mcV_t, \mcS)$ are not necessarily equal.  For each $t(x,y) \in T_2$, we have
the following chain of quasivarieties:
\begin{equation*}
\mcV_t,  \mcS  \subseteq  \tqV_t \subseteq  \tV_t \subseteq \mcV_t \circ \mcS \subseteq \mcT_t \circ \mcS.
\end{equation*}
In particular, if the set $\Gamma_t$ of regular identities is empty, then $\mcV_t$ coincides with $\mcT_t$.

\section{Some varieties related to the quasivariety $\mcT_t \circ \mcS$}\label{S:subvarieties}

Recall that, for $x \cdot y = t(x,y) \in T_2$, the subvariety $\mcV_t$ of $\mcT_t$ is defined by a set $\Sigma_t$ consisting of the identity $x \cdot y = x$ and a set of regular identities $\Gamma_t$. The best known subvariety of $\mcV_t \circ \mcS$, different from $\mcV$ and $\mcS$,  is the regularization $\tV_t$ of the variety $\mcV_t$.
We will briefly recall the basic facts concerning the regularization.

The variety $\mcV_t$ can be viewed as a category in which the morphisms are simply the
homomorphisms between $\mcV_t$-algebras. Any ordered set, $S$, can be viewed as a category
whose objects are the elements of the set. If $r, s \in S$ then there is a unique morphism
from $r$ to $s$ if $r\geq s$. Otherwise there are no morphisms from $r$ to $s$. Of course,
 any semilattice can be ordered by considering the basic operation to be the infimum.

Suppose that $S$ is a semilattice and there is a covariant functor
\begin{equation*}
F\colon S  \rightarrow \mcV_t;\  (r \geq s) \mapsto (\varphi_{r,s}\colon A_r \rightarrow A_s).
\end{equation*}
Then the disjoint sum $\bigsqcup_{s \in S} A_s$ may be viewed as the semilattice-ordered system
\begin{equation}\label{eq:1}
\bigl(\{A_s \mid s\in  S\};\, \{\varphi_{r,s}\mid  r,s \in S, r \geq s\}\bigr),
\end{equation}
where the $\Om$-operations on the sum are defined by
\begin{equation*}
\omega\colon A_{s_1} \times \dots \times A_{s_n} \rightarrow A_s ; (a_{s_1}, \dots, a_{s_n})
\mapsto a_{s_1}\varphi_{s_1,s} \dotsm a_{s_n}\varphi_{s_n,s}\, \omega
\end{equation*}
for each ($n$-ary) $\omega \in \Om$, where each $s_i \in S$ and
$s = s_1\cdot \dots \cdot s_n$.  The resulting semilattice sum is called the \emph{P\l
  onka sum} of the algebras $A_s$, and is denoted by $\sum_{s\in S} A_s$. (See \cite{P67},
\cite{PR92}, \cite[Ch.~4]{RS02} .)

It is a remarkable fact that the class of P\l onka sums of $\mcV_t$-algebras forms the
\emph{regularization} $\tV_t$ of the variety $\mcV_t$. The class $\tV_t$ is the join (in
the lattice of varieties) of $\mcV_t$ and $\mcS$. Thus it satisfies precisely the regular
identities true in $\mcV_t$.  Its axiomatization follows from P\l onka's theorem (see
\cite{PR92}, \cite[Ch.~4]{RS02}). The variety $\tV_t$ is defined by the regular identities
$\Gamma_t$ of $\Sigma_t$ and the following identities (P1) -- (P5): 
\begin{itemize}
\item[(P1)] $x \cdot x = x$,
\item[(P2)] $x \cdot (y\cdot z) = (x\cdot y) \cdot z$,
\item[(P3)] $x \cdot (y\cdot z) = x \cdot (z\cdot y)$,
\item[(P4)] $y \cdot x_1 \dots x_n \omega = y \cdot x_1 \cdot \dots \cdot x_n$,
\item[(P5)] $x_1 \dots x_n \omega \cdot y = (x_1 \cdot y ) \dots (x_n \cdot y)\, \omega,$
\end{itemize}
where $\omega$ ranges through all symbols in $\Om$.  Let us note that if $\Gamma_t$ is
empty, then the regularization $\tV_t$ becomes the regularization $\tT$, and is defined by
the identities (P1) -- (P5).

By results of \cite{BR96} it is known that the quasi-regularization $\tqV_t$ of $\mcV_t$
consists of P\l onka sums of $\mcV_t$-algebras as in~\eqref{eq:1} in which the morphisms
$\varphi_{r,s}$ are all injective. It coincides with the class
$\mathsf{I}\mathsf{S}\mathsf{P}(\mcV_t \cup \mcS)$ of (isomorphic copies of) subalgebras
of products of algebras in $\mcV_t \cup \mcS$, and is defined, relative to $\tV_t$, by the quasi-identity
\begin{equation*}
(x \cdot y = x \mathbin{\&} y \cdot x = y \mathbin{\&} x \cdot z = z \cdot x = z \mathbin{\&} y
\cdot z = z \cdot y = z) \rightarrow (x = y).
\end{equation*}

The congruence $\varrho$ of a $\tV_t$-algebra $A$ providing its decomposition into the P\l onka sum of its $\mcV_t$-summands can be obtained explicitly as
\begin{equation}\label{E:partition}
\varrho = \{(a,b) \mid a \cdot b = a\,\, \mbox{and}\,\, b \cdot a = b\}.
\end{equation}

Now, let $\oT$ denote the variety defined by the identities (P1) -- (P4)
above. It follows immediately that $\tT \subseteq \overline{\mcT_t}$. For a subvariety
$\mcV_t$ of $\mcT_t$, let $\overline{\mcV_t}$ be defined by $\Gamma_t$ together with (P1)
-- (P4). As observed by C.~Bergman and D. Failing \cite{BF15},
$\overline{\mcV_t} \subseteq \mcV_t \circ \mcS$. That is to say, each algebra $A$ in
$\overline{\mcV_t}$ is a semilattice sum of  $\mcV_t$-subalgebras. Moreover, the
decomposition is given by the same formula \eqref{E:partition} as is the case for the
regularization. However, the $\overline{\mcV_t}$-algebras are not necessarily P\l onka
sums of $\mcV_t$-algebras, though the reduct $(A, \cdot_t)$ of each
$\overline{\mcV_t}$-algebra $A = \bigsqcup_{s \in S}A_s$ is a left-normal band and is the P\l
onka sum $\sum_{s \in S}A_s$ of the left-zero band reducts $(A_s, \cdot)$. We will call
the variety $\overline{\mcV_t}$ the \emph{pseudo-regularization} of the variety $\mcV_t$.
If the operation $x \cdot y$ of an $\Om$-algebra $(A, \Om)$ satisfies the conditions (P1)
-- (P4), then it is called a \emph{pseudopartition operation.} If it
satisfies all of (P1) -- (P5) it is called a \emph{partition operation.}

The third variety we are interested in will play an essential role in our main results.

\begin{definition}\label{D:eqbasprod}
Let $\sigma$ be an identity of type $\tau$ of the form
  \begin{equation*}
    y_1\ldots y_nu = y_1\ldots y_n v,
\end{equation*}
where not all the variables $y_i$ necessarily appear in $u$ and $v$.  

For each $m > 0$, define
\begin{align*}
    \sigma^{p}_m = \bigl\{\, &\bfx r_1\ldots \bfx r_n u = \bfx r_1\ldots \bfx r_n v \mid \\ &r_1,\dots, r_n \in T_m , \bfx r_i = x_1 \ldots x_m r_i
     \, \bigr\}.
\end{align*}
Let $\sigma^p$ be the union of all $\sigma^{p}_{m}$ for $m > 0$.   For a set $\Sigma$ of
identities, define its \emph{prolongation} to be 
\begin{equation*}
    \Sigma^p = \bigcup_{\sigma \in \Sigma} \sigma^p.
\end{equation*}
\end{definition}

The intent of Definition~\ref{D:eqbasprod} is that, for every positive integer $m$, each
variable $y_i$ in $\sigma$ is replaced by a term of the form $x_1\ldots x_m
r_i$. However, since $r_i \in T_m$, each term $r_i$ must contain precisely all of the
variables $x_1,\dots,x_m$. Thus even if the identity $\sigma$ is irregular, every member
of $\sigma^p$ will be a regular identity.

\begin{definition}
  Let $\mcV$ be a variety of $\Omega$-algebras, and let $\operatorname{Id}(\mcV)$ be the
  set of all equations that hold in $\mcV$. We define \emph{the prolongation of $\mcV$,}
  denoted $\pro{\mcV}$, to be the variety of all algebras satisfying
  $\pro{\operatorname{Id}(\mcV)}$. Put succinctly,
  $\pro{\mcV} = \operatorname{Mod}(\operatorname{Id}(\pro{\mcV)})$.
\end{definition}

In fact, in this definition, it suffices to replace $\operatorname{Id}(\mcV)$ by any
equational base for $\mcV$. This will follow from Corollary~\ref{C:vp_base}.

\begin{lemma}\label{L:VoS}
  Let $\sigma$ be an identity that holds in a variety $\mcV$. Then the quasivariety $\mcV
  \circ \mcS$ satisfies the identities of $\sigma^{p}$.
\end{lemma}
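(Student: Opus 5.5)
Let $A\in\mcV\circ\mcS$ with semilattice decomposition $A=\bigsqcup_{s\in S}A_s$, where $\varrho$ is the corresponding congruence, $S=A/\varrho$ is an $\Om$-semilattice, and each block $A_s$ is a subalgebra lying in $\mcV$. We must show that each identity
\begin{equation*}
\bfx r_1\ldots\bfx r_n u=\bfx r_1\ldots \bfx r_n v
\end{equation*}
in $\sigma^p_m$ holds in $A$. Fix $m>0$, terms $r_1,\dots,r_n\in T_m$, and an assignment $x_i\mapsto a_i\in A$ for $i=1,\dots,m$. Write $b_i$ for the value of $\bfx r_i$ under this assignment.

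The first step is to locate the elements $b_i$. Let $s_j\in S$ be the block index of $a_j$, that is, $a_j\in A_{s_j}$. Since the natural map $A\to S$ is a homomorphism, $b_i$ lies in the block indexed by the value of $r_i$ in $S$ at $(s_1,\dots,s_m)$. Each $r_i$ contains exactly the variables $x_1,\dots,x_m$, and by \eqref{E:sl} every term in a semilattice evaluates to the product of the variables it contains. So every $b_i$ lies in the single block $A_s$ with $s=s_1\cdot\ldots\cdot s_m$. This is the only place where the hypothesis $r_i\in T_m$ is used, and it is the key point of the argument.

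The second step uses that $A_s$ is a subalgebra belonging to $\mcV$, so it satisfies $\sigma$. Substituting $y_i\mapsto b_i\in A_s$ into $\sigma$ shows that the values of $y_1\ldots y_nu$ and $y_1\ldots y_nv$ at $(b_1,\dots,b_n)$, computed in $A_s$, coincide. Because $A_s$ is a subalgebra of $A$, these values agree with the corresponding values computed in $A$. Those values are precisely the two sides of the identity in $\sigma^p_m$ under the original assignment. It does not matter that some $y_i$ may be missing from $u$ or $v$: the term operations are still evaluated on the tuple $(b_1,\dots,b_n)$, which lies in $A_s$.

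Since $m$, the terms $r_i$, and the assignment were arbitrary, $A$ satisfies every identity in $\sigma^p$. As $A$ was an arbitrary member of $\mcV\circ\mcS$, the lemma follows. I do not expect a serious obstacle. The only care needed is to make explicit that regular terms in all the $x_j$ land in the block over the meet $s_1\cdots s_m$, which follows from the semilattice identities \eqref{E:sl}.
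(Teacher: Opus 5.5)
Your proposal is correct and follows essentially the same route as the paper: show that all the values $\bfa r_i$ lie in a single congruence block, then apply $\sigma$ inside that block, which belongs to $\mcV$. The paper gets the first step by noting that each identity $\bfx r_i=\bfx r_j$ is regular and so holds in $A/\varrho$, whereas you compute the common block index $s_1\cdots s_m$ via \eqref{E:sl}; these amount to the same observation.
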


\begin{proof}
  Assume $\sigma$ is as in Definition~\ref{D:eqbasprod} and let $A$ be in
  $\mcV \circ \mcS$ with semilattice replica congruence $\varrho$. Let $m$ be a positive
  integer and $r_1,\dots, r_n \in T_m$.  Choose $a_1,\dots,a_m \in A$ and write
  $\mathbf{a} r_i $ as a shorthand for $a_1\ldots a_m r_i$. We must show that
  $\bfa r_1 \ldots \bfa r_n u = \bfa r_1 \ldots \bfa r_n v$ holds in $A$.

For any $i,j \leq m$, the identity $x_1\ldots x_m r_i = x_1\ldots x_m r_j$ is
regular. Since $A/\varrho$ is a semilattice, it must satisfy this identity. Hence
\begin{equation*}
(a_1\ldots a_m r_i,\, a_1\ldots a_m r_j) \in \varrho.
\end{equation*}
Thus all of the elements $\bfa r_i$, for $i=1,\dots,n$, lie in the same congruence
block. By assumption, each congruence block is a member of $\mcV$, so it satisfies the
identity $\sigma$. From this we obtain $\bfa r_1 \ldots \bfa r_n u = 
\bfa r_1 \ldots \bfa r_n v$ as desired.
\end{proof}

\begin{proposition}\label{P:VoS}
For any variety $\mcV$,
\begin{equation}\label{E:VSinP}
\mcV \circ \mcS \subseteq \pro{\mcV}.
\end{equation}
\end{proposition}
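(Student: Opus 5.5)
The inclusion follows almost directly from Lemma~\ref{L:VoS} once the definition of $\pro{\mcV}$ is unwound. By definition, $\pro{\mcV}$ is the class of all $\Om$-algebras satisfying every identity in $\pro{\operatorname{Id}(\mcV)} = \bigcup_{\sigma \in \operatorname{Id}(\mcV)} \sigma^p$. So to show $\mcV \circ \mcS \subseteq \pro{\mcV}$, it suffices to fix an arbitrary algebra $A \in \mcV\circ\mcS$ and an arbitrary identity $\rho \in \pro{\operatorname{Id}(\mcV)}$, and to check that $A$ satisfies $\rho$.

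The steps, in order, are as follows. First, since $\rho \in \pro{\operatorname{Id}(\mcV)}$, there is some $\sigma \in \operatorname{Id}(\mcV)$ with $\rho \in \sigma^p$. Second, $\sigma$ holds in $\mcV$, since it belongs to $\operatorname{Id}(\mcV)$. Lemma~\ref{L:VoS} then says that every member of $\mcV\circ\mcS$, in particular $A$, satisfies every identity of $\sigma^p$, hence satisfies $\rho$. Third, since $\rho$ was arbitrary, $A \in \operatorname{Mod}(\pro{\operatorname{Id}(\mcV)}) = \pro{\mcV}$, which gives \eqref{E:VSinP}.

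There is no genuine obstacle here, because all the real work (showing that the substituted terms $\bfa r_i$ fall into a single $\varrho$-block, which lies in $\mcV$) was done in Lemma~\ref{L:VoS}. The only point needing care is bookkeeping. An identity $\sigma\in\operatorname{Id}(\mcV)$ can always be written in the form $y_1\ldots y_n u = y_1\ldots y_n v$ of Definition~\ref{D:eqbasprod} by listing all variables occurring on either side, so every element of $\operatorname{Id}(\mcV)$ has a well-defined prolongation and the lemma applies to it. I would also remark that the argument uses no hypothesis on $\mcV$, such as strong irregularity. That is consistent with the proposition being stated for an arbitrary variety, with strong irregularity needed only for the reverse inclusion proved later.
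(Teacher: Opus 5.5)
Your proposal is correct and takes the same approach as the paper. The paper gives no separate proof: it states the proposition right after Lemma~\ref{L:VoS} as an immediate consequence, which is exactly your argument of unwinding $\pro{\mcV} = \operatorname{Mod}(\pro{\operatorname{Id}(\mcV)})$ and applying the lemma to each $\sigma \in \operatorname{Id}(\mcV)$.
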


\section{Free algebras in varieties $\pro{\mcV}$}\label{S:free}

Basic properties of free algebras
in sub(quasi)varieties of $\pro{\mcV}$ follow from basic properties of free algebras in
prevarieties of $\Om$-algebras (i.e. classes of similar algebras closed under subalgebras
and direct products) as given for example in \cite[\S3.3]{RS02}.
If $\mcQ$ and
$\mcQ'$ are two subquasivarieties with $\mcQ' \leq \mcQ$, then the free
$\mcQ'$-algebra $X\mcQ'$ over $X$ is isomorphic to the $\mcQ'$-replica of the algebra
$X\mcQ$. (See also \cite[Thm.~4.28]{CB12}.) In particular, if $\mcV$ is a regular
variety, then the semilattice replica $X\mcV/\varrho$ of $X\mcV$ is
isomorphic to the free semilattice $X\mcS$:
\begin{equation}\label{E:freesl}
X\mcV/\varrho \cong X\mcS.
\end{equation}
Recall also that the free semilattice $X\mcS$ over $X$ is isomorphic to the semilattice
$P_{f}(X)$ of all finite non-empty subsets of $X$ under union.

For an $\Omega$-term $w$,  $\mf{var}(w)$ will denote the set of variables that actually
appear in  $w$. Suppose that $\mcV$ is a regular variety and let $a$ be an element of
$X\mcV$. There is a term $w$ with  $\mf{var}(w) = \{x_1,\dots, x_n\} \subseteq X$ so that
$a=x_1,\dots,x_nw$. If $a=y_1,\dots,y_mv$ for some other term $v$ and $y_1,\dots, y_m
\in X$,  then by the freeness of $X\mcV$ the identity $x_1,\dots,x_nw = y_1,\dots,
y_mv$ holds in $\mcV$. By regularity we must have $\mf{var}(w) = \mf{var}(v)$, which we can take
as a definition of $\mf{var}(a)$. As is customary, we often blur the distinction between
an element of the free algebra and a representing term.

\begin{lemma}\label{L:freereg}
  Let $\mcV$ be a regular variety and let $\varrho$ be the
  semilattice replica congruence of the free algebra $X\mcV$ over $X$. Then for $w,v \in X\mcV$
  \begin{equation*}
(w,v) \in \varrho \iff \mf{var}(w) = \mf{var}(v).
\end{equation*}
\end{lemma}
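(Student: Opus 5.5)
We prove the two directions separately. Both rest on the isomorphism \eqref{E:freesl}, $X\mcV/\varrho \cong X\mcS$, together with the description of $X\mcS$ as the semilattice $P_f(X)$ of finite non-empty subsets of $X$ under union.

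The plan is to make that isomorphism explicit as the map $\mf{var}$. Define $\mf{var}\colon X\mcV \to P_f(X)$ by $a \mapsto \mf{var}(a)$. This is well defined because $\mcV$ is regular, as explained just before the lemma. It is a homomorphism into the $\Omega$-semilattice $P_f(X)$: in $P_f(X)$ each basic operation $\omega$ acts as the union of its arguments, by \eqref{E:sl}. For terms $w_1,\dots,w_n$ we have $\mf{var}(w_1\dots w_n\omega) = \mf{var}(w_1)\cup\dots\cup \mf{var}(w_n)$, since every variable occurring in the arguments literally occurs in the composite term. The map is surjective, because a finite set $\{x_1,\dots,x_k\}$ is the image of $x_1\cdot\ldots\cdot x_k$ for any $t\in T_2$. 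Plurality of the type guarantees that such a binary term exists, and it involves both of its variables.

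Direction ($\Leftarrow$). The kernel of $\mf{var}$ is a congruence whose quotient $P_f(X)$ is a semilattice. Since $\varrho$ is the smallest such congruence, $\varrho \subseteq \ker(\mf{var})$. So far this only says that $(w,v)\in\varrho$ implies $\mf{var}(w)=\mf{var}(v)$, which is in fact the forward direction ($\Rightarrow$). For the backward direction we need $\ker(\mf{var}) \subseteq \varrho$. Suppose $\mf{var}(w)=\mf{var}(v)=\{x_1,\dots,x_n\}$, and write $w = x_1\dots x_n w'$ and $v = x_1\dots x_n v'$. The identity $w'=v'$ is regular, so it holds in the semilattice $X\mcV/\varrho$. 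Evaluating it at the classes $x_i/\varrho$ gives $w/\varrho = v/\varrho$, that is, $(w,v)\in\varrho$.

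Direction ($\Rightarrow$). This is the containment $\varrho\subseteq\ker(\mf{var})$ established above, which follows from minimality of the replica congruence once we know $\mf{var}$ is a homomorphism onto a semilattice.

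The only delicate point is checking that $\mf{var}$ really is a homomorphism onto an $\Omega$-semilattice, which requires the union formula for $\mf{var}$ of a composite term. Alternatively, one can avoid this and appeal directly to \eqref{E:freesl}: the isomorphism sends $x/\varrho$ to $\{x\}$ and therefore sends $w/\varrho$ to $\mf{var}(w)$, because both sides are homomorphic images of the same term. Either route gives the equivalence.
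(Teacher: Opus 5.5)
Your proof is correct. For the inclusion $\varrho\subseteq\ker(\mf{var})$ it is the paper's argument: $\mf{var}$ is a homomorphism onto the $\Omega$-semilattice $P_f(X)$, well defined by regularity of $\mcV$, and the replica congruence is minimal.

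For the reverse inclusion you take a different route. The paper factors $h=\mf{var}$ as $(\mf{nat}\varrho)\overline{h}$ and concludes from \eqref{E:freesl} that $\overline{h}$ is an isomorphism, hence $\ker h=\varrho$. Strictly, this needs one more remark. An abstract isomorphism $X\mcV/\varrho\cong X\mcS$ does not by itself make this particular $\overline{h}$ bijective. One must also note that $\overline{h}$ sends the free generators $x/\varrho$ to the free generators $\{x\}$ of $P_f(X)$.

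You instead prove $\ker(\mf{var})\subseteq\varrho$ directly. Two terms with the same variable set form a regular identity, which holds in every $\Omega$-semilattice and so in $X\mcV/\varrho$. Evaluating at the classes $x_i/\varrho$ then gives $w\mathrel{\varrho}v$. This is the same device as in Lemma~\ref{L:VoS}. It is more elementary, since it needs neither \eqref{E:freesl} nor any facts about free objects. It also shows that regularity of $\mcV$ is used only to make $\mf{var}$ well defined on $X\mcV$, i.e. only for the forward inclusion. The paper's version is shorter on the page but relies on the structure of free algebras. Your closing ``alternative'' through \eqref{E:freesl} is essentially the paper's proof.

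One presentational fix is needed. The paragraph headed ``Direction ($\Leftarrow$)'' first proves ($\Rightarrow$) and then ($\Leftarrow$), and the later ``Direction ($\Rightarrow$)'' only points back. Relabel the paragraphs so each proves what its heading says.
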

\begin{proof}
Let $\theta$ be the binary relation on $A = X\mcV$ defined by
\begin{equation*}
(w,v) \in \theta \iff \mf{var}(w) = \mf{var}(v).
\end{equation*}

It is easy to see that the relation $\theta$ is a congruence relation, and is the kernel of
the homomorphism
\begin{equation*}
h \colon X\mcV \rightarrow P_f(X);\ x_1 \dots x_n u \mapsto \mf{var}(u)
\end{equation*}
onto the semilattice of all finite subsets of $X$ under union.  Now by universality of
replication (see \cite[Lemma~3.3.1]{RS02}), it follows that there is a unique homomorphism
$\overline{h}\colon X\mcV/\varrho \rightarrow P_f(X)$ such that the composition
$(\mf{nat}\varrho)\overline{h}$ equals $h$. In particular, under this composition
$u \mapsto u/\varrho \mapsto \mf{var}(u)$.  By \eqref{E:freesl}, $\overline{h}$ is an
isomorphism. Hence $\theta = \varrho$.
\end{proof}

In particular, for any variety $\mcV$, the variety $\pro{\mcV}$ is regular. Thus
\begin{equation}\label{E:replP}
X\pro{\mcV}/\varrho \cong X\mcS.
\end{equation}

\begin{proposition}\label{P:freetp}
  Let $\mcV$ be a variety with equational base $\Sigma$. Let $\mcW$ be the variety defined
  by the prolongation $\pro{\Sigma}$ (see Definition~\ref{D:eqbasprod}). Then the free
  $\mcW$-algebra  $A=X\mcW$ is a semilattice
of $\mcV$-algebras. More precisely, $A = \bigsqcup_{s \in S} A_s $, where $S = X\mcS$ is
the free semilattice over $X$ and all $A_s$ are members of $\mcV$.

In particular, by taking $\Sigma$ to be the set of all identities holding in $\mcV$,
\begin{equation*}
  A = X\pro{\mcV} \in \mcV \circ \mcS.
\end{equation*}
\end{proposition}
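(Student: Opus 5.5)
The plan is to work directly with the free algebra $A = X\mcW$ and its semilattice replica congruence $\varrho$. Since every identity in $\pro{\Sigma}$ is regular (as observed after Definition~\ref{D:eqbasprod}), $\mcW$ is a regular variety. Lemma~\ref{L:freereg} then applies: for $w,v\in A$ we have $(w,v)\in\varrho$ if and only if $\mf{var}(w)=\mf{var}(v)$. Moreover $A/\varrho\cong X\mcS\cong P_f(X)$ by \eqref{E:freesl}. So $A=\bigsqcup_{s\in S}A_s$ with $S=X\mcS$, where $A_s$ consists of the elements whose variable set is the finite set $s=\{x_1,\dots,x_m\}$. Each block is a subalgebra because $S$ is idempotent. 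It then remains only to show that each $A_s$ satisfies every $\sigma\in\Sigma$.

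Fix $s=\{x_1,\dots,x_m\}$ and an identity $\sigma\colon y_1\ldots y_n u=y_1\ldots y_n v$ from $\Sigma$. Take arbitrary elements $b_1,\dots,b_n\in A_s$. Each $b_i$ is represented by a term $r_i$ with $\mf{var}(r_i)=\{x_1,\dots,x_m\}$, i.e.\ $r_i\in T_m$ after naming the variables $x_1,\dots,x_m$. Thus $b_i=x_1\ldots x_m r_i$, and
\begin{equation*}
b_1\ldots b_n u = \bfx r_1\ldots\bfx r_n u .
\end{equation*}
The identity $\bfx r_1\ldots \bfx r_n u=\bfx r_1\ldots\bfx r_n v$ belongs to $\sigma^p_m\subseteq\pro{\Sigma}$, so it holds in $\mcW$, hence in $A$, evaluated at the generators $x_1,\dots,x_m$. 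Therefore $b_1\ldots b_n u=b_1\ldots b_n v$, and $A_s\models\sigma$. Since $\sigma\in\Sigma$ was arbitrary, $A_s\in\mcV$, which gives $A\in\mcV\circ\mcS$.

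The only delicate point is the passage from ``$b_i$ lies in $A_s$'' to ``$b_i$ is the value of an $m$-ary term containing exactly the variables of $s$'', together with the handling of blocks for infinite $X$. Both are settled by the well-definedness of $\mf{var}$ in a regular variety, discussed before Lemma~\ref{L:freereg}: every element is the value of a term in finitely many generators, and its variable set is independent of the chosen term. The evaluation also uses exactly the same $m$ generators for all the $b_i$, which is what the definition of $\sigma^p_m$ requires.

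For the final assertion, take $\Sigma=\operatorname{Id}(\mcV)$. Then $\mcW=\pro{\mcV}$ by definition, and the result is $X\pro{\mcV}\in\mcV\circ\mcS$.
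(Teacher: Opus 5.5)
Your proposal is correct and follows the paper's proof essentially step for step. Regularity of $\mcW$ gives $A/\varrho\cong X\mcS$ and, via Lemma~\ref{L:freereg}, identifies each block $A_s$ with the elements whose variable set is exactly $s$; each $A_s$ then satisfies $\sigma$ because the corresponding member of $\sigma^p_m\subseteq\pro{\Sigma}$ holds when evaluated at the generators $x_1,\dots,x_m$. Your direct argument that $\mcW$ is regular, because $\pro{\Sigma}$ consists of regular identities, is slightly more careful than the paper's appeal to \eqref{E:replP}, which is stated for $\pro{\mcV}$ rather than $\mcW$.
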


\begin{proof}
  By \eqref{E:replP}, we already know that $A/\varrho$ is isomorphic to $S = X\mcS$. We
  complete the proof by showing that each $A_s$ satisfies the identities of
  $\Sigma$. Consider any identity $y_1 \dots y_n u = y_1 \dots y_n v$ of $\Sigma$. Let
  $a_1, \ldots, a_n$ be elements of $A_s$. Since $A_s$ is a $\varrho$-class, it follows
  that all $a_1, \ldots, a_n$ are $\varrho$-equivalent. Thus, by Lemma~\ref{L:freereg},
  $\mf{var}(a_1) = \dots = \mf{var}(a_n) = \{x_1,\dots, x_m\}$ for some $m>0$. This means
  that each $a_i$ may be represented as a term
  $x_1 \dots x_m r_i \in T_m$. Thus
  \begin{align*}
    a_1 \dots a_n u = &(x_1 \dots x_m r_1) \, \ldots \, (x_1 \dots x_m r_{n}) u =\\
    &(x_1 \dots x_m r_{1}) \, \ldots \, (x_1 \dots x_m r_{n}) v =
    a_1  \dots a_n v,
  \end{align*}
  since the middle equality is an identity in $\Sigma^p$.
\end{proof}

\begin{corollary}\label{C:vp_base}
  Let $\mcV$ be a variety with equational base $\Sigma$. Then $\pro{\Sigma}$ is an
  equational base for $\pro{\mcV}$.
\end{corollary}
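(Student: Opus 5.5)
We aim to show that $\mcW = \operatorname{Mod}(\pro{\Sigma})$ coincides with $\pro{\mcV} = \operatorname{Mod}(\pro{\operatorname{Id}(\mcV)})$. One inclusion is immediate. Since $\Sigma \subseteq \operatorname{Id}(\mcV)$, we have $\pro{\Sigma} \subseteq \pro{\operatorname{Id}(\mcV)}$, and hence $\pro{\mcV} \subseteq \mcW$. For the reverse inclusion it suffices to check that every identity of $\pro{\operatorname{Id}(\mcV)}$ holds in $\mcW$. Since identities are preserved under homomorphic images and every $\mcW$-algebra is a homomorphic image of a free $\mcW$-algebra, it is enough to verify these identities in the free algebra $A = X\mcW$ over a countably infinite set $X$.

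Here Proposition~\ref{P:freetp} does the work. It tells us that $A = \bigsqcup_{s\in S} A_s$ is a semilattice sum of $\mcV$-algebras, that is, $A \in \mcV\circ\mcS$. By Proposition~\ref{P:VoS}, or directly by Lemma~\ref{L:VoS} applied to each $\sigma \in \operatorname{Id}(\mcV)$, the algebra $A$ then satisfies every identity in $\sigma^p$. Consequently $A$ satisfies all of $\pro{\operatorname{Id}(\mcV)}$. Therefore $A \in \pro{\mcV}$, and since $\pro{\mcV}$ is a variety, every homomorphic image of $A$ also lies in $\pro{\mcV}$. This gives $\mcW \subseteq \pro{\mcV}$.

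The only point needing care is the reduction to the free algebra. An algebra in $\mcW$ generated by a set of size $\kappa$ is a quotient of the free algebra on $\kappa$ generators, so strictly speaking the argument should apply Proposition~\ref{P:freetp} to $X\mcW$ with $|X|$ arbitrary. The proposition is stated for arbitrary $X$, so this causes no difficulty. Alternatively, one can note that an identity holds in a variety as soon as it holds in its countably generated free algebra, since each identity involves only finitely many variables. I do not expect a genuine obstacle here: the substance of the statement is already contained in Proposition~\ref{P:freetp} together with Lemma~\ref{L:VoS}.
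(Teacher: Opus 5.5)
Your proof is correct and is essentially the paper's argument: both rest on Proposition~\ref{P:freetp} (the free $\mcW$-algebra lies in $\mcV\circ\mcS$) together with Lemma~\ref{L:VoS}. The only difference is in how the argument is finished. You prove two inclusions directly: $\pro{\mcV}\subseteq\mcW$ from $\pro{\Sigma}\subseteq\pro{\operatorname{Id}(\mcV)}$, and $\mcW\subseteq\pro{\mcV}$ from $X\mcW\in\mcV\circ\mcS\subseteq\pro{\mcV}$. The paper instead says that $X\mcW$ and $X\pro{\mcV}$ are both free in $\mcV\circ\mcS$ and invokes uniqueness of free algebras, which tacitly uses the containment $\mcV\circ\mcS\subseteq\mcW$ that your version makes explicit.
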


\begin{proof}
  Let $\mcW$ be the variety defined by $\pro{\Sigma}$ and let $\Delta$ be the set of all
  identities holding in $\mcV$. Applying Proposition~\ref{P:freetp} to both $\Sigma$ and
  $\Delta$ we see that for any set $X$, both $X\mcW$ and $X\pro{\mcV}$ are free algebras
  in the quasivariety $\mcV \circ \mcS$. But free algebras are unique and furthermore
  determine the variety. So $\mcW = \pro{\mcV}$.
\end{proof}

\begin{corollary}\label{C:gen}
The variety $\pro{\mcV}$ is generated by the quasivariety $\mcV \circ \mcS$.
\end{corollary}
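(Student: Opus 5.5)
We need to show that $\pro{\mcV} = \mathsf{V}(\mcV \circ \mcS)$, i.e.\ that $\pro{\mcV}$ is the smallest variety containing the quasivariety $\mcV \circ \mcS$. The plan is to prove the two inclusions separately. Each follows almost immediately from results already established.

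For the inclusion $\mathsf{V}(\mcV\circ\mcS) \subseteq \pro{\mcV}$, I invoke Proposition~\ref{P:VoS}. It gives $\mcV \circ \mcS \subseteq \pro{\mcV}$. Since $\pro{\mcV}$ is a variety, being defined by the identities $\pro{\operatorname{Id}(\mcV)}$, it contains the variety generated by any of its subclasses.

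For the reverse inclusion $\pro{\mcV} \subseteq \mathsf{V}(\mcV\circ\mcS)$, I use that every variety is generated by its free algebras. More precisely, every algebra $B \in \pro{\mcV}$ is a homomorphic image of the free algebra $X\pro{\mcV}$ for a suitably large set $X$ of generators (taking $|X| \geq |B|$). By Proposition~\ref{P:freetp}, applied with $\Sigma = \operatorname{Id}(\mcV)$, each free algebra $X\pro{\mcV}$ lies in $\mcV\circ\mcS$. Hence $B \in \mathsf{H}(\mcV\circ\mcS) \subseteq \mathsf{V}(\mcV\circ\mcS)$.

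Combining the two inclusions gives $\pro{\mcV} = \mathsf{V}(\mcV\circ\mcS)$. An equivalent, more equational formulation is also available. Any identity true in $\mcV\circ\mcS$ holds in the free algebras $X\pro{\mcV}$ for countably infinite $X$, and hence holds in $\pro{\mcV}$. So the two classes satisfy the same identities.

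There is no real obstacle here. The only point requiring care is that Proposition~\ref{P:freetp} applies to free algebras over arbitrary sets $X$, including infinite ones, and not merely finitely generated ones. Its proof does not depend on the size of $X$, so this is satisfied.
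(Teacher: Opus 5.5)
Your proof is correct and is essentially the argument the paper intends. The corollary is stated there without proof, as an immediate consequence of Proposition~\ref{P:VoS} ($\mcV\circ\mcS\subseteq\pro{\mcV}$) and Proposition~\ref{P:freetp} (the free algebras $X\pro{\mcV}$ lie in $\mcV\circ\mcS$), combined exactly as you do: every member of $\pro{\mcV}$ is a homomorphic image of such a free algebra.
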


In the next two sections we will show that if $\mcV$ is a strongly irregular variety, then the
variety $\pro{\mcV}$ coincides with the Mal'tsev product $\mcV \circ \mcS$.

\section{The Mal'tsev product $\mcT_t \circ \mcS$}\label{S:Tt}

In this section we clarify the relationship between the quasivariety
$\mcT_t \circ \mcS$ and the variety $ \pro{\mcT_t}$.

Following Proposition \ref{P:freetp}, we assume that the free $\pro{\mcT_t}$-algebra
$A = X\pro{\mcT_t}$ over $X$ is the semilattice sum $A = \bigsqcup_{s \in S} A_s $, where
$S$, the semilattice replica of $A$, is the free semilattice over $X$, and the summands
$A_s$ are in $\mcT_t$. The elements of $A_s$ will be denoted by small letters
$a_s, b_s, ...$ with the same index as in $A_s$, and we will write $xy$ for $t(x,y)$. Note
that if $a_r \in A_r$ and $b_s \in A_s$ then the semilattice structure dictates that $a_r
b_s \in A_{rs}$.

\begin{lemma}\label{L:congrfree}
Let $\theta$ be a congruence of the free $\pro{\mcT_t}$-algebra $A =
X\pro{\mcT_t}$. Assume that there is a pair $(a'_r, b'_s)$ of elements of $A$, where $r, s
\in S$, such that $(a'_r, b'_s) \in \theta$. Then for any $a_r \in A_r$ and any $b_s \in
A_s$
\begin{equation*}
(b_s , b_s a_r) \in \theta.
\end{equation*}

\end{lemma}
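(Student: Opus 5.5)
The plan is to work with unary polynomials of $A$ built from the term $t$, writing $xy$ for $t(x,y)$. Two facts will be used throughout. First, every block $A_u$ is a $\mcT_t$-algebra, so $pq = p$ whenever $p,q$ lie in the same block; in particular $b_sb'_s = b_s$ and $a_ra'_r = a_r$. Second, products move blocks according to the semilattice: $A_uA_v \subseteq A_{uv}$.

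The first step exploits the given pair. Apply the polynomial $x \mapsto b_s x$ to $(a'_r,b'_s)\in\theta$. This gives
\begin{equation*}
c := b_s a'_r \;\theta\; b_s b'_s = b_s ,
\end{equation*}
where $c \in A_{sr}$. So $b_s$ is already $\theta$-related to one element of the block $A_{sr}$. The second step spreads this to all of $A_{sr}$. For any $e\in A_{sr}$, apply $x\mapsto xe$ to $b_s\,\theta\, c$. Since $c$ and $e$ lie in the same block, $ce=c$, and therefore
\begin{equation*}
b_se \;\theta\; ce = c \;\theta\; b_s \qquad\text{for every } e\in A_{sr}.
\end{equation*}
By the symmetric computation starting from $a'_r$, we also get $a_r \,\theta\, a_re$ for suitable elements $e$. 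The goal is then to choose $e$ (for instance $e=b_sa_r$, $e=a_rb_s$, or $e=c$) and a further polynomial so that $b_sa_r$ appears. A natural candidate is as follows. Put $d=b_sa_r\in A_{sr}$. Then $d = dc \;\theta\; db_s$, because $dc=d$ within $A_{sr}$ and $c\,\theta\, b_s$. This reduces the claim to relating $b_s$ and $d b_s = (b_sa_r)b_s$, using the relations $b_s\,\theta\, b_se$ for $e\in A_{sr}$.

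The main obstacle is this last step: proving an identity in the free algebra $A=X\pro{\mcT_t}$ that links $(b_sa_r)b_s$ (or $b_s(b_sa_r)$) back to $b_sa_r$. For this I would use the prolongation identities $\pro{\{t(y_1,y_2)=y_1\}}$ together with the fact that $A$ is free. Concretely, represent $b_s$ and $a_r$ as terms and note that, by Lemma~\ref{L:freereg}, elements with the same variable set $s\cup r$ can be rewritten as terms in $T_m$ over the same variables $x_1,\dots,x_m$. The identity $t(\bfx r_1,\bfx r_2)=\bfx r_1$ then lets me collapse any product of two such elements to its left factor.

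If a direct identity such as $(b_sa_r)b_s = b_sa_r$ fails in $A$, I would instead chain the relations $b_s\,\theta\, b_se$ for a cleverly chosen $e\in A_{sr}$ with the polynomial $x\mapsto xa_r$. That polynomial gives $d\,\theta\, ca_r$, and $ca_r$ can be compared to $c$ by the same block argument, yielding $d\,\theta\, b_s$ by transitivity.
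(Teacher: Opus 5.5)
Your last paragraph is a correct proof, and it is essentially the paper's argument. The paper writes $b_sa_r=(b_sb'_s)(a_ra'_r)\mathrel{\theta}(b_sa'_r)(a_rb'_s)=b_sa'_r\mathrel{\theta}b_sb'_s=b_s$, applying compatibility of $\theta$ with $t$ in both arguments at once. Your chain $b_sa_r\mathrel{\theta}ca_r\mathrel{\theta}c\mathrel{\theta}b_s$, with $c=b_sa'_r$, is the same computation split into two unary steps. Make that the proof, and write out the step you call ``the same block argument''. The symmetric version of your first step gives $a_r=a_ra'_r\mathrel{\theta}a_rb'_s\in A_{sr}$. Left multiplication by $c$ then yields $ca_r\mathrel{\theta}c(a_rb'_s)=c$, because $c$ and $a_rb'_s$ lie in the same block $A_{sr}$, which satisfies $xy=x$. (Your relations $a_r\mathrel{\theta}a_re$ with $e\in A_{sr}$ would serve equally well. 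The relations $b_s\mathrel{\theta}b_se$ and $d\mathrel{\theta}db_s$ are not needed.)

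The route you single out as the main obstacle cannot succeed, so the fallback is not optional. Already for $\mc{LZ}$, where $t(x,y)=x\cdot y$, neither $(yx)y=yx$ nor $y(yx)=yx$ holds in $\pro{\mc{LZ}}$. Take one-element blocks $\{a\}$, $\{b\}$ and the left-zero band $\{p,q\}$ over the semilattice $\{0,1,2\}$ with $1\cdot 2=0$. Set $ab=ba=p$ and $bp=pb=q$, and define the remaining cross-block products arbitrarily in $\{p,q\}$. This algebra lies in $\mc{LZ}\circ\mcS\subseteq\pro{\mc{LZ}}$, yet $(ba)b=b(ba)=q\neq p=ba$. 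Appealing to freeness and Lemma~\ref{L:freereg} cannot help. The identities in $\pro{\{t(x,y)=x\}}$ only collapse $t$-products of two elements from one block, whereas $b_s$ and $b_sa_r$ generally lie in different blocks. In fact freeness of $A$ plays no role in this lemma: the argument works verbatim in any semilattice sum of $\mcT_t$-algebras.
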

\begin{proof}

From our assumptions, $b_s, b'_s \in A_s$, $a_r, a'_r \in A_r$ and $(b_sa'_r), (a_r b'_s) \in
A_{rs}$. Each of $A_r$, $A_s$, and $A_{rs}$ lie in $\mcT_t$, so they satisfy the identity
$xy=x$. Consequently
\begin{equation*}
  b_s = b_s b'_s, \quad a_r = a_r a'_r, \quad (b_sa'_r)(a_rb'_s) = b_sa'_r.
\end{equation*}
Combining these equalities with the assumption that $b'_s \mathrel{\theta} a'_r$ we obtain
\begin{equation*}
  b_s a_r = (b_sb'_s)(a_ra'_r) \mathrel{\theta} (b_sa'_r)(a_rb'_s) = (b_s a'_r)
  \mathrel{\theta} (b_s b'_s) = b_s.
\end{equation*}
\end{proof}

\begin{remark}\label{R:congrfree}
Suppose $r\neq s$ in $S$. If $(a'_r, b'_s) \in \theta$, then each $a_r$ and each $b_s$ is
$\theta$-related to some element of $A_{rs}$. Moreover, if additionally $q < rs < r$ and
also $(c'_{rs}, d'_q) \in \theta$ for some $c'_{rs} \in A_{rs}$ and $d'_q \in A_q$, then
each element of $A_{rs}$ is $\theta$-related to some element of $A_q$, and in particular,
$a_r \, \theta \, a_r c_{rs} \, \theta \, a_r c_{rs} \cdot d_q$ for any $a_r, c_{rs},
d_q$.
\end{remark}

The next lemma follows from \cite[Lemma~4.66]{MMT87}.

\begin{lemma}\label{L:3-perm}
Let $A$ be an $\Om$-algebra and let $\alpha$ and $\beta$ be congruences of~$A$. Then the
following conditions are equivalent.
\begin{itemize}
\item[(a)] $\beta \circ \alpha \circ \beta \subseteq \alpha \circ \beta  \circ \alpha$,
\item[(b)] $\alpha \vee \beta = \alpha \circ \beta  \circ \alpha$.
\end{itemize}
\end{lemma}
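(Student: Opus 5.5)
We prove the equivalence directly, using that $\alpha\circ\beta\circ\alpha$ always lies between the relevant relations. First recall the standing facts. The relational product of congruences is reflexive and contains each factor, and $\alpha,\beta\subseteq\alpha\circ\beta\circ\alpha\subseteq\alpha\vee\beta$. Moreover, $\alpha\vee\beta$ is the union of the chain
\begin{equation*}
\alpha \subseteq \alpha\circ\beta \subseteq \alpha\circ\beta\circ\alpha \subseteq \alpha\circ\beta\circ\alpha\circ\beta\subseteq\cdots .
\end{equation*}
This is the standard description of the join in the lattice of equivalence relations, and the join of two congruences as equivalences is already a congruence.

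For (b)$\Rightarrow$(a), note that $\beta\circ\alpha\circ\beta\subseteq\alpha\vee\beta$ holds always. By (b), $\alpha\vee\beta=\alpha\circ\beta\circ\alpha$, which gives (a) immediately.

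For (a)$\Rightarrow$(b), we show by induction that every finite alternating product collapses into $\alpha\circ\beta\circ\alpha$. The key computation, using associativity of relational product, idempotence $\alpha\circ\alpha=\alpha$, and (a), is
\begin{equation*}
\alpha\circ\beta\circ\alpha\circ\beta\circ\alpha \;=\;\alpha\circ(\beta\circ\alpha\circ\beta)\circ\alpha\subseteq \alpha\circ(\alpha\circ\beta\circ\alpha)\circ\alpha=\alpha\circ\beta\circ\alpha .
\end{equation*}
Hence every alternating product of length $n$ is contained in the length-$(n+2)$ product beginning and ending with $\alpha$. By the display and induction on $n$, that product equals $\alpha\circ\beta\circ\alpha$. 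The union of the chain is therefore $\alpha\circ\beta\circ\alpha$, which is (b).

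There is no real obstacle here. The only point needing care is the description of $\alpha\vee\beta$ as the union of the alternating compositions. One might worry that this union must be checked to be compatible with the operations, but it is: a composition of compatible reflexive relations is compatible, and a directed union of compatible relations is compatible. Transitivity of the union is automatic from the chain structure. Alternatively, one may cite \cite[Lemma~4.66]{MMT87} directly.
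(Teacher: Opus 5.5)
Your proof is correct. The paper gives no argument of its own and only points to \cite[Lemma~4.66]{MMT87}; your direct argument is exactly the standard proof behind that citation: write $\alpha\vee\beta$ as the union of the alternating products, then use (a) to collapse $\alpha\circ\beta\circ\alpha\circ\beta\circ\alpha$ into $\alpha\circ\beta\circ\alpha$ (with an even-length term of the chain simply contained in the next odd-length one, so ``length-$(n+2)$'' is only a loose phrasing).
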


\begin{lemma}\label{L:3permrt}
  Let $\theta$ be a congruence of the free $\pro{\mcT_t}$-algebra $A = X\pro{\mcT_t}$, and
  let~$\varrho$ be the semilattice replica congruence of $A$. Then
\begin{equation}
\theta \vee \varrho = \theta \circ \varrho \circ \theta.
\end{equation}
\end{lemma}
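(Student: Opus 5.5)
The plan is to reduce the claim to the three-permutability criterion of Lemma~\ref{L:3-perm}, applied with $\alpha = \theta$ and $\beta = \varrho$. By that lemma it suffices to prove the inclusion
\begin{equation*}
\varrho \circ \theta \circ \varrho \subseteq \theta \circ \varrho \circ \theta .
\end{equation*}
The key input will be Lemma~\ref{L:congrfree}, which says that a single $\theta$-link between two $\varrho$-classes $A_r$ and $A_s$ forces \emph{every} element of one class to be $\theta$-related to its product with any element of the other class.

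So take $(a,b) \in \varrho \circ \theta \circ \varrho$. Then there are $c,d \in A$ with
\begin{equation*}
a \mathrel{\varrho} c \mathrel{\theta} d \mathrel{\varrho} b .
\end{equation*}
Since the $\varrho$-classes are the summands $A_s$, we may write $a = a_r$ and $c = c_r$ in some $A_r$, and $d = d_s$ and $b = b_s$ in some $A_s$, with $(c_r, d_s) \in \theta$.

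Applying Lemma~\ref{L:congrfree} to the pair $(c_r, d_s) \in \theta$ gives $(b_s, b_s a_r) \in \theta$. Since $\theta$ is symmetric, we also have $(d_s, c_r) \in \theta$. Applying the same lemma with the roles of $r$ and $s$ interchanged then gives $(a_r, a_r b_s) \in \theta$.

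Next, the semilattice structure puts both $a_r b_s$ and $b_s a_r$ in the same summand $A_{rs}$, as noted at the start of this section. Hence $(a_r b_s, b_s a_r) \in \varrho$. Chaining these relations yields
\begin{equation*}
a_r \mathrel{\theta} a_r b_s \mathrel{\varrho} b_s a_r \mathrel{\theta} b_s ,
\end{equation*}
so $(a,b) \in \theta \circ \varrho \circ \theta$, as required. Lemma~\ref{L:3-perm}(b) then gives $\theta \vee \varrho = \theta \circ \varrho \circ \theta$.

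The only point needing care is that Lemma~\ref{L:congrfree} is stated with a specific orientation: from $(a'_r, b'_s) \in \theta$ it concludes something about $b_s a_r$. We therefore need to invoke it twice, once as stated and once after swapping $r$ and $s$ via the symmetry of $\theta$, to get both outer $\theta$-steps. I do not expect any other obstacle, since the heavy lifting is already done in Lemma~\ref{L:congrfree}.
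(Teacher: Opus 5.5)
Your proof is correct and is essentially the paper's: you reduce via Lemma~\ref{L:3-perm} to $\varrho\circ\theta\circ\varrho\subseteq\theta\circ\varrho\circ\theta$ and route the chain through the summand $A_{rs}$, using commutativity of $A/\varrho$ for the middle $\varrho$-link. The only difference is that the paper gets the outer $\theta$-links directly from compatibility of $\theta$ and the law $xy=x$ in $A_r$ and $A_s$, namely $a_r = a_r c_r \mathrel{\theta} a_r d_s \mathrel{\varrho} b_s c_r \mathrel{\theta} b_s d_s = b_s$, so the two invocations of Lemma~\ref{L:congrfree} are unnecessary.
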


\begin{proof}
  Suppose that
  \begin{equation*}
a_r \ \varrho \ a'_r \ \theta \ b'_s \ \varrho \ b_s.
\end{equation*}
Since $A_r$ and $A_s$ lie in $\mcT_t$, and $A/\varrho$ is commutative
\begin{equation*}
a_r = a_ra'_r \mathrel{\theta} a_r b'_s \mathrel{\varrho} b_s a'_r
\mathrel{\theta}  b_sb'_s = b_s.
\end{equation*}
Therefore
\begin{equation*}
\varrho \circ \theta \circ \varrho \, \subseteq \, \theta \circ \varrho \circ\theta.
\end{equation*}
By Lemma \ref{L:3-perm}, it follows that
\begin{equation*}
\theta \vee \varrho  = \theta \circ \varrho \circ\theta.
\end{equation*}
\end{proof}

Let $\theta$ be a congruence of a $\pro{\mcT_t}$-algebra $A$, and $\varrho$ the
semilattice replica congruence of $A$. Let $\psi$ be the join of the congruences $\theta$
and $\varrho$.  It is well known that $(A/\theta)/(\psi/\theta) \cong A/\psi$, and since
$\psi \geq \varrho$, it follows that $A/\psi$ is a member of the variety $\mcS$ of
$\Om$-semilattices.

\begin{lemma}\label{L:equivlz}
Let $\theta$ be a congruence of a $\pro{\mcT_t}$-algebra $A$, and let $\psi = \theta \vee
\varrho$. Then a congruence class $(a_r/\theta)/(\psi/\theta)$  of $A/\theta$ satisfies
the identity $x \cdot y = x$ if and only if, for any $b_s \in A$
\begin{equation}\label{E:equivlz}
(a_r, b_s) \in \psi \implies (a_r b_s, a_r) \in \theta.
\end{equation}
\end{lemma}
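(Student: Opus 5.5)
This is essentially an unwinding of definitions. The plan is to describe the congruence class of $a_r/\theta$ modulo $\psi/\theta$ concretely as a subset of $A/\theta$. By the correspondence between congruences of $A/\theta$ and congruences of $A$ above $\theta$, the pair $(c/\theta, d/\theta)$ lies in $\psi/\theta$ exactly when $(c,d)\in\psi$. Hence the class of $a_r/\theta$ is
\begin{equation*}
C=\{\, b/\theta \mid b\in A,\ (a_r,b)\in\psi \,\}.
\end{equation*}
Since $A/\psi$ is a semilattice (noted before the lemma), $\psi/\theta$ has an idempotent quotient, so $C$ is a subalgebra of $A/\theta$. The assertion that $C$ satisfies $x\cdot y=x$ is therefore meaningful. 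It means that for all $c,d\in A$ with $(a_r,c),(a_r,d)\in\psi$ we have $(cd,c)\in\theta$. Here $t$ is computed in $A$, and the quotient map is a homomorphism, so $(c/\theta)(d/\theta)=(cd)/\theta$.

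For the forward implication, take $c=a_r$ and $d=b_s$ with $(a_r,b_s)\in\psi$. Both elements lie in $C$, so the identity gives $(a_rb_s,a_r)\in\theta$, which is exactly \eqref{E:equivlz}.

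For the converse, we assume \eqref{E:equivlz} holds for this fixed $a_r$ and every $b_s$. Take arbitrary $c,d$ that are $\psi$-related to $a_r$; we need $(cd,c)\in\theta$. The difficulty is that \eqref{E:equivlz} only speaks about products with first factor $a_r$. A natural route is to write $c=c_p$ in its summand $A_p$ and use the $\mcT_t$-identity inside summands to transfer to $a_r$. From \eqref{E:equivlz} with $b=c$ and $b=d$ we get $c\mathrel{\theta}$-related data: $a_r\mathrel{\theta}a_rc$ and $a_r\mathrel{\theta}a_rd$. To get $c$ on the left, I would use that $(c,a_r)\in\psi$.

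The cleanest reading, and I expect the intended one, is that \eqref{E:equivlz} is required for every representative $a_r$ of the class, that is, for all elements $\psi$-related to the given one, since the lemma is stated for a generic class. With that quantification, applying \eqref{E:equivlz} to the representative $c$ and to $b=d$, which is $\psi$-related to $c$ by transitivity, gives $(cd,c)\in\theta$ directly, and the converse is immediate.

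If instead only the single fixed $a_r$ is allowed, I would try to derive the statement for $c$ by combining $a_r\mathrel{\theta}a_rc$ and $a_r\mathrel{\theta}a_rd$ with Lemma~\ref{L:congrfree}-type manipulations inside the summands. That requires more care and is the main obstacle, so I would settle the quantifier interpretation first.
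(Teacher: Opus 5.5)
Your argument is correct and is essentially the paper's proof. The paper's proof is exactly this unwinding: $(c/\theta,d/\theta)\in\psi/\theta \iff (c,d)\in\psi$, and $(c/\theta)(d/\theta)=c/\theta \iff (cd,c)\in\theta$. The paper also tacitly uses your ``every representative'' reading of \eqref{E:equivlz}: its proof lets $a_r/\theta$ range over the class, and Proposition~\ref{P:quotient} verifies \eqref{E:equivlz} for all $a_r$. So the fixed-$a_r$ variant raised in your last paragraph is not needed and can be dropped.
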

\begin{proof}
Let $a_r/\theta$ and $b_s/\theta$ be members of $A/\theta$. Then
\begin{equation*}
 (a_r/\theta, b_s/\theta) \in \psi/\theta \,\, \iff \,\, (a_r,b_s) \in \psi.
\end{equation*}
On the other hand
\begin{equation*}
(a_r/\theta) \cdot (b_s/\theta) = a_r/\theta \,\, \iff \,\, (a_r b_s, a_r) \in \theta.
\end{equation*}
Hence the assertion that $(a_r/\theta)/(\psi/\theta)$ satisfies $x \cdot y = x$ is equivalent to~\eqref{E:equivlz}.
\end{proof}

\begin{proposition}\label{P:quotient}
  Let $\theta$ be a congruence of the free $\pro{\mcT_t}$-algebra $A = X\pro{\mcT_t}$
  over~$X$, and set $\psi=\theta \vee \varrho$. Then the quotient $B = A/\theta$ of $A$ is
  a semilattice sum of $\mcT_t$-subalgebras. In particular, $\psi/\theta$ is the
  semilattice replica congruence of $A/\theta$, and $B$ is the semilattice sum of the
  $\psi/\theta$-classes $(a_r/\theta)/(\psi/\theta)$ as $a_r$ ranges over $A$.
\end{proposition}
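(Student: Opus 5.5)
Since $\psi=\theta\vee\varrho\geq\varrho$, the quotient $B/(\psi/\theta)\cong A/\psi$ is an $\Om$-semilattice, as noted just before Lemma~\ref{L:equivlz}. Because $\mcS$ is idempotent, each $\psi/\theta$-class of $B$ is a subalgebra. It therefore remains to show that every such class lies in $\mcT_t$, i.e.\ satisfies $x\cdot y=x$. By Lemma~\ref{L:equivlz} this reduces to verifying condition \eqref{E:equivlz}: for all $a_r,b_s\in A$ with $(a_r,b_s)\in\psi$ we must have $(a_rb_s,a_r)\in\theta$. Once the classes are known to lie in $\mcT_t$, the claim that $\psi/\theta$ is the semilattice replica congruence follows from the uniqueness of the decomposition for strongly irregular varieties (\cite[\S 3.3]{RS02}), as recalled in the Background.

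To verify \eqref{E:equivlz}, I will use Lemma~\ref{L:3permrt}, which gives $\psi=\theta\circ\varrho\circ\theta$. So suppose
\begin{equation*}
a_r \mathrel{\theta} c_p \mathrel{\varrho} d_p \mathrel{\theta} b_s ,
\end{equation*}
where $c_p,d_p\in A_p$ for a common index $p$. I want to show $a_rb_s\mathrel{\theta}a_r$. First, $a_rb_s\mathrel{\theta}a_rd_p$, since $b_s\mathrel\theta d_p$. Next, apply Lemma~\ref{L:congrfree} to the pair $(d_p,b_s)\in\theta$; with the roles arranged so that the element from the first index is multiplied on the right, it yields $(a_r, a_rd_p)\in\theta$ when $a_r$ is taken in the appropriate block. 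More carefully, Lemma~\ref{L:congrfree} says that a $\theta$-related pair from $A_p$ and $A_s$ forces $(y,yx)\in\theta$ for every $y$ in one block and $x$ in the other. Using the pair $(a_r,c_p)\in\theta$ instead gives $(c_p,c_px)\in\theta$ for all $x\in A_r$, and $(a_r,a_ry)\in\theta$ for all $y\in A_p$. In particular $a_r\mathrel\theta a_rd_p$, because $d_p\in A_p$. Combining the two steps, $a_rb_s\mathrel\theta a_rd_p\mathrel\theta a_r$, which is exactly \eqref{E:equivlz}.

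The main point to check is that Lemma~\ref{L:congrfree} is symmetric in its two blocks. Its hypothesis is a single pair $(a'_r,b'_s)\in\theta$, and the conclusion should hold with $r$ and $s$ interchanged, because $\theta$ is symmetric. I will need the form $(a_r,a_ry_p)\in\theta$ derived from $(a_r,c_p)\in\theta$. If $r=p$ this is trivial, since $A_r\in\mcT_t$ gives $a_ry_p=a_r$. The argument never uses that $X$ is free beyond what Lemmas~\ref{L:congrfree} and~\ref{L:3permrt} already require, so no further obstacle is expected.
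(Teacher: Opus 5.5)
Your proof is correct and follows the paper's route: you reduce to condition \eqref{E:equivlz} via Lemma~\ref{L:equivlz}, write $\psi=\theta\circ\varrho\circ\theta$ by Lemma~\ref{L:3permrt}, and work along the chain $a_r\mathrel\theta c_p\mathrel\varrho d_p\mathrel\theta b_s$. The only difference is that the paper finishes with the one-line computation $a_rb_s\mathrel\theta c_pd_p=c_p\mathrel\theta a_r$ (using that $c_p\mathrel\varrho d_p$ forces $c_pd_p=c_p$), so your appeal to Lemma~\ref{L:congrfree} is valid in its corrected form, applied to the pair $(a_r,c_p)$ rather than $(d_p,b_s)$, but not needed.
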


\begin{proof}
Since $\psi \geq \varrho$, it follows that $A/\psi \in \mcS$. Thus we only need to show
that for each $a_r/\theta \in A/\theta$, the congruence class $(a_r/\theta)/(\psi/\theta)$
satisfies $x \cdot y = x$. By Lemma \ref{L:equivlz}, this is equivalent to the
condition~\eqref{E:equivlz}.

So our aim now is to verify the implication \eqref{E:equivlz}. For $a_r \in A_r$ and
$b_s \in A_s$, let $(a_r, b_s) \in \psi$.  By Lemma \ref{L:3permrt}, we know that
\begin{equation*}
\psi = \theta \vee \varrho =\theta \circ \varrho \circ\theta.
\end{equation*}
Thus there are elements $c$ and $d$ in $A$ such that $a_r \mathrel{\theta} c
\mathrel{\varrho} d \mathrel{\theta} b_s$. But $c \mathrel{\varrho} d$ implies $c=cd$. Hence
$a_r \mathrel{\theta} c = cd
\mathrel{\theta} a_rb_s$.
\end{proof}

Let us note that $A/\theta$ is isomorphic to the semilattice sum of the $\psi$-classes
$a/\psi$ of $A$ over the semilattice $A/\psi$, where each $a/\psi$ is the union of
$\theta$-classes $b/\theta$ such that $(a,b) \in \psi$ (or equivalently $(a/\theta,
b/\theta) \in (\psi/\theta)$).

As a direct corollary of Proposition \ref{P:quotient} one obtains the following theorem.

\begin{theorem}\label{T:varPt}
The quasivariety $\mcT_{t} \circ \mcS$ and the variety $\pro{\mcT_t}$ coincide.
In particular, the Mal'tsev product $\mcT_{t} \circ \mcS$ is a variety, and $\{t(x,y) = x\}^p$ is its equational base.
\end{theorem}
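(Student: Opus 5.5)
We establish the two inclusions $\mcT_t \circ \mcS \subseteq \pro{\mcT_t}$ and $\pro{\mcT_t} \subseteq \mcT_t \circ \mcS$ separately.

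The first inclusion is exactly Proposition~\ref{P:VoS} applied with $\mcV = \mcT_t$.

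For the reverse inclusion, we take an arbitrary algebra $B \in \pro{\mcT_t}$. Since $\pro{\mcT_t}$ is a variety, $B$ is a homomorphic image of a free algebra $A = X\pro{\mcT_t}$ for a suitable set $X$. Hence $B \cong A/\theta$ for some congruence $\theta$ of $A$. Proposition~\ref{P:quotient} then says that $A/\theta$ is a semilattice sum of $\mcT_t$-subalgebras, namely the $\psi/\theta$-classes over the semilattice $A/\psi$, where $\psi=\theta\vee\varrho$. By definition of the Mal'tsev product this means $B \in \mcT_t \circ \mcS$. The two inclusions together give $\mcT_t\circ\mcS = \pro{\mcT_t}$, so the quasivariety $\mcT_t\circ\mcS$ is in fact a variety.

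For the equational base, recall that $\mcT_t$ is by definition the variety with equational base $\Sigma=\{t(x,y)=x\}$. Corollary~\ref{C:vp_base} then shows that $\pro{\Sigma}=\{t(x,y)=x\}^p$ is an equational base for $\pro{\mcT_t}$, and hence for $\mcT_t\circ\mcS$.

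There is essentially no obstacle here, since all the substantive work (the $3$-permutability of Lemma~\ref{L:3permrt} and the left-zero verification in Proposition~\ref{P:quotient}) has already been done. The only point to check is that Proposition~\ref{P:quotient} was proved for free algebras of arbitrary rank. It was: $X$ is arbitrary there, so every member of $\pro{\mcT_t}$ is covered.
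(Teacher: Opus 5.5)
Your proposal is correct and follows essentially the same route as the paper: the inclusion $\mcT_t\circ\mcS\subseteq\pro{\mcT_t}$ comes from Proposition~\ref{P:VoS}, and the reverse inclusion comes from writing a member of $\pro{\mcT_t}$ as a quotient of a free algebra $X\pro{\mcT_t}$ and applying Proposition~\ref{P:quotient}. Your explicit appeal to Corollary~\ref{C:vp_base} for the equational base $\{t(x,y)=x\}^p$ fills in a step that the paper's proof leaves implicit.
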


\begin{proof}
  We have already observed that $\mcT_t \circ \mcS \subseteq \pro{\mcT_t}$. Conversely, if
  an algebra~$B$ is a member of $\pro{\mcT_t}$, then there is a set $X$ so that $B$ is a
  homomorphic image of $X\pro{\mcT_t}$. By Proposition \ref{P:quotient}, the algebra
  $B$ belongs to $\mcT_t \circ \mcS$.
\end{proof}

\subsection{The most basic case of $\mcT_t$}\label{S:LZS}
If we take $\mathcal G$ to be the variety of all groupoids  (magmas, binars) $(G, \cdot)$, and set
$t(x,y) = x \cdot y$, then $\mathcal G_t$ is the variety $\mc{LZ}$ of left-zero semigroups and
one obtains the following corollary.

\begin{corollary}\label{C:LZS}
The quasivariety $\mc{LZ} \circ \mc{S}$ is a variety. In particular $\mc{LZ} \circ \mc{S} = \pro{\mc{LZ}}$.
\end{corollary}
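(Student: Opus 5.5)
This is a direct specialization of Theorem~\ref{T:varPt}. I would take the plural type $\Omega$ consisting of a single binary operation symbol. The variety $\mathcal G$ of all groupoids is then the variety of all $\Omega$-algebras, and I would choose the term $t(x,y)=x\cdot y$. This term lies in $T_2$, since it contains exactly the two variables $x$ and $y$.

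First, I would identify $\mcT_t$ with $\mc{LZ}$. By definition, $\mcT_t$ is the class of groupoids satisfying $x\cdot y=x$. Any such groupoid is automatically associative: $(xy)z=xy=x$ and $x(yz)=x$. Hence $\mcT_t$ consists precisely of the left-zero semigroups, so $\mcT_t=\mc{LZ}$. The variety $\mcS$ of this type is then the ordinary variety of semilattices, as noted earlier in the paper.

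Next, I would apply Theorem~\ref{T:varPt} with this $t$. It gives $\mcT_t\circ\mcS=\pro{\mcT_t}$, and in particular that the product is a variety. Substituting $\mcT_t=\mc{LZ}$ yields $\mc{LZ}\circ\mcS=\pro{\mc{LZ}}$, which is the statement.

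The only points needing care are routine. One must check that the type is plural: it has no nullary symbols and does have a binary one. One must also note that $\mc{LZ}$ is defined as a variety of groupoids by the single identity $x\cdot y=x$, since associativity is a consequence. Optionally, Corollary~\ref{C:vp_base} would give the explicit base $\{x\cdot y=x\}^p$ for the product. I expect no real obstacle here.
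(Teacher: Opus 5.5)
Your proposal is correct and follows the paper's own route: the paper also gets this corollary by specializing Theorem~\ref{T:varPt} to the type with a single binary operation and $t(x,y)=x\cdot y$, so that $\mcT_t=\mc{LZ}$. Your check that $x\cdot y=x$ forces associativity, so that $\mcT_t$ really is the variety of left-zero semigroups, is a useful detail that the paper leaves implicit.
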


Note however that members of $\pro{\mc{LZ}}$ are not necessarily semigroups. First observe
that the equational base of $\pro{\mc{LZ}}$ contains the idempotent law $x \cdot x =
x$. The free $\pro{\mc{LZ}}$-algebra $A = X(\pro{\mc{LZ}})$ over the two element set
$X = \{x,y\}$ is the semilattice sum of three subalgebras: two one-element subalgebras
$A_x = \{x\}$ and $A_y = \{y\}$ and one subalgebra $A_{x,y}$ consisting of all elements
represented by terms of $T_2$. Obviously $xy, yx \in A_{x,y}$. To describe further
elements, let us introduce the following notation. For any element $a$ of $A_{x,y}$ and a
variable $z$, let $aR(z) = a \cdot z$ and $aL(z) = z \cdot a$. Then let $aE(z)$ be either
$aR(z)$ or $aL(z)$.
\begin{lemma}\label{L:fr}
Each element of $A_{x,y}$ different from $xy$ and $yx$ may be expressed in the standard form
\begin{equation}\label{E:lz}
(xy)E(z_1) \dots E(z_{k}) \,\, \mbox{or} \,\, (yx)E(z_1) \dots E(z_{k}),
\end{equation}
where $z_i = x$ or $z_i = y$, and k = 1, 2, ....
\end{lemma}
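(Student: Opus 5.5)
Every element $a\in A_{x,y}$ is represented by a groupoid term $w$ with $\mf{var}(w)=\{x,y\}$ (Lemma~\ref{L:freereg}). I will argue by induction on the length of $w$ that $a$ equals either $xy$, $yx$, or an element in the standard form \eqref{E:lz}. The tools are the identities of $\pro{\mc{LZ}}$. Since $\mc{LZ}$ is defined by $y_1y_2=y_1$, Corollary~\ref{C:vp_base} shows that $\pro{\mc{LZ}}$ is axiomatized by $(\bfx r_1)(\bfx r_2)=\bfx r_1$ for all $r_1,r_2\in T_m$. The cases used here are $m=1$ (idempotency $xx=x$) and $m=2$: for any $p,q\in A_{x,y}$ we have $pq=p$, because $A_{x,y}$ is a left-zero band by Proposition~\ref{P:freetp}.

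The induction runs as follows. The shortest terms in $T_2$ are $xy$ and $yx$. Any longer term is $w=w_1w_2$, and there are three cases.
\begin{itemize}
\item If $\mf{var}(w_1)=\{x,y\}$, then $w_1\in A_{x,y}$. If also $\mf{var}(w_2)=\{x,y\}$, then $w=w_1$ by the left-zero law, and $w_1$ is shorter, so the induction hypothesis applies. Otherwise $w_2$ involves a single variable $z$, and then $w_2=z$ in $A$, because $A_z=\{z\}$ is a one-element block (idempotency and the description of $A_x,A_y$). Hence $w=w_1R(z)$.
\item Symmetrically, if $\mf{var}(w_2)=\{x,y\}$ and $w_1$ involves one variable $z$, then $w_1=z$ and $w=w_2L(z)$.
\item If both $w_1$ and $w_2$ involve one variable each, those variables differ because $\mf{var}(w)=\{x,y\}$. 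Then $w=xy$ or $w=yx$.
\end{itemize}
In the first two cases the induction hypothesis puts the shorter factor in the form $xy$, $yx$, or \eqref{E:lz}. Applying one more $E(z)$ therefore yields the form \eqref{E:lz} with $k\ge1$.

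The only delicate point is the justification that single-variable subterms collapse to the variable, i.e.\ that $A_x=\{x\}$. I would derive this directly from the $m=1$ prolongation identities: any term in the single variable $x$ reduces to $x$ by repeated use of $xx=x$, working from the innermost subterms outward. The left-zero law inside $A_{x,y}$ is then the only other ingredient. No uniqueness of the normal form is claimed, so nothing further is needed.
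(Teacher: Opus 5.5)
Your proof is correct and follows essentially the same route as the paper. Both argue by induction on length: the left-zero law in $A_{x,y}$ collapses a product of two $\{x,y\}$-terms to its left factor, so the only way to get a longer element is to multiply by a single variable on the left or the right. Your version simply makes explicit the case split on the outermost product and the fact that single-variable subterms collapse to the variable ($A_x=\{x\}$ via idempotency), both of which the paper leaves implicit.
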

\begin{proof}
The proof is by induction on the length of the element. The shortest elements $xy$ and
$yx$ are already in $A_{x,y}$.
Since $A_{x,y}$ is a left-zero band, it follows that for any $u, v \in A_{x,y}$, we have $u \cdot v = u$.
So longer elements of $A_{x,y}$ may be obtained from elements of the form \eqref{E:lz}
only by multiplying them by a variable $x$ or a variable $y$ from the left or from the
right.
\end{proof}

\begin{corollary}\label{C:freePLZ}
The free algebra $A = X\pro{\mc{LZ}}$ is not a semigroup.
\end{corollary}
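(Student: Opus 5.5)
The plan is to exhibit three elements of $A_{x,y}$ that violate associativity. Since $A$ is free, it suffices to find a single algebra in $\pro{\mc{LZ}}$ in which associativity fails. The most direct route, though, is to work inside $A$ itself, using the left-zero structure of the summand $A_{x,y}$.

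The candidate failure is
\begin{equation*}
(x\cdot y)\cdot x \quad\text{versus}\quad x\cdot (y\cdot x).
\end{equation*}
Both sides lie in $A_{x,y}$. The right-hand side is $x\cdot(yx) = (yx)L(x)$, and the left-hand side is $(xy)R(x)$.

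To show these differ, I would construct an explicit model of $\pro{\mc{LZ}}$ rather than rely on the normal forms of Lemma~\ref{L:fr}, whose uniqueness has not been established. By Theorem~\ref{T:varPt}, and since $\mc{LZ} = \mathcal G_t$, every semilattice sum of left-zero bands lies in $\pro{\mc{LZ}}$. Because $A$ is free, an identity failing in any such sum also fails in $A$. So the task reduces to building a groupoid $B$ with the following properties:
\begin{itemize}
\item $B$ has a congruence whose quotient is a semilattice;
\item every block of that congruence is a left-zero band;
\item $B$ is not associative.
\end{itemize}

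The model I would use is $B = \{p, q, c, d\}$, with blocks $\{p\}$, $\{q\}$ and $\{c,d\}$ over the three-element semilattice $\{x, y, xy\}$, in which $xy$ lies below both $x$ and $y$. Products within each block are forced by the left-zero law:
\begin{equation*}
pp = p,\quad qq = q,\quad uv = u \ \text{ for } u,v\in\{c,d\}.
\end{equation*}
All cross products must land in $\{c,d\}$, and these can be chosen freely because any choice respects the semilattice congruence. I would set
\begin{equation*}
pq = c,\quad qp = d,\quad cp = c,\quad pd = d,
\end{equation*}
and fill in the remaining cross products arbitrarily.

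With these choices,
\begin{equation*}
(pq)p = cp = c, \qquad p(qp) = pd = d \neq c.
\end{equation*}
Hence $B$ is not associative.

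The main point to check is that $B$ is genuinely a semilattice sum of left-zero bands. This holds because the map sending $p \mapsto x$, $q \mapsto y$ and $c, d \mapsto xy$ is a homomorphism onto the semilattice, so its kernel is a congruence with semilattice quotient, and each block is a left-zero band by construction. Therefore $B \in \mc{LZ}\circ\mcS = \pro{\mc{LZ}}$, associativity fails in $B$, and consequently it fails in the free algebra $A$ already on two generators.
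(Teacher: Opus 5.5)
Your argument is correct, and it takes a different route from the paper.

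The paper's proof appeals to the standard forms of Lemma~\ref{L:fr}. It notes that $x\cdot yx=(yx)L(x)$ and $xy\cdot x=(xy)R(x)$ are different standard forms. As you observe, Lemma~\ref{L:fr} only shows that every element of $A_{x,y}$ \emph{has} a standard form, not that the form is unique, so the paper's one-line argument tacitly uses uniqueness. Justifying that would itself require a member of $\pro{\mc{LZ}}$ in which distinct standard words take distinct values, which is a countermodel argument on a larger scale.

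Your four-element groupoid $B$ does exactly the work needed:
\begin{itemize}
\item the map $p\mapsto x$, $q\mapsto y$, $c,d\mapsto xy$ is a homomorphism, because every cross product lands in $\{c,d\}$;
\item each block is a left-zero band;
\item $(pq)p=c\neq d=p(qp)$ refutes the two-variable identity $(xy)x=x(yx)$, which therefore cannot hold in the free algebra on $\{x,y\}$.
\end{itemize}
You need only the easy inclusion $\mc{LZ}\circ\mcS\subseteq\pro{\mc{LZ}}$ of Proposition~\ref{P:VoS}, not the full strength of Theorem~\ref{T:varPt}.

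The trade-off is scope versus rigour. The paper's route, once uniqueness of standard forms is supplied, gives a complete description of the summand $A_{x,y}$. Yours establishes only non-associativity, but does so fully rigorously with a finite model.
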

\begin{proof}
By Lemma \ref{L:fr}, it is easy to see that the elements $x \cdot yx$ and $xy \cdot x$ are different.
\end{proof}

On the other hand, the Mal'tsev product $\mc{LZ} \circ_{\mc{SG}} \mcS$ of the varieties
$\mc{LZ}$ and $\mcS$, but taken relative to the variety $\mc{SG}$ of semigroups, is a
variety of semigroups, and may be described in a simple way. Note that $\mc{LZ}
\circ_{\mc{SG}} \mcS$ is just the intersection $\pro{\mc{LZ}} \cap \mc{SG}$, and that the
members of this variety are bands (idempotent semigroups).

\begin{proposition}\label{P:LZSSG}
The Mal'tsev product $\mc{LZ} \circ_{\mc{SG}} \mcS$ coincides with the variety $\mc{LR}$ of left-regular bands defined by the identity $xyx = xy$.
\end{proposition}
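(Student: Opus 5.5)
We have to show $\mc{LZ}\circ_{\mc{SG}}\mcS = \mc{LR}$, where $\mc{LR}$ is the variety of semigroups satisfying $xyx = xy$. Note that $x\cdot x = x$ follows from $xyx=xy$ by taking $y=x$, since then $xxx = xx$ gives only $x^3=x^2$. So to avoid that ambiguity, we read $\mc{LR}$ as the variety of bands satisfying $xyx=xy$. This is the standard meaning, and the paper says members are bands.

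\emph{Inclusion $\mc{LZ}\circ_{\mc{SG}}\mcS\subseteq\mc{LR}$.} Let $A$ be a semigroup with congruence $\varrho$ such that $A/\varrho$ is a semilattice and each block is a left-zero semigroup.
\begin{itemize}
\item Idempotency: $x$ and $xx$ lie in the same block, and the block satisfies $uv=u$, so $xx=x$.
\item Left regularity: for any $x,y$, both $xy$ and $xyx$ lie in the block of $xy$, because $A/\varrho$ is a semilattice and hence $xyx \mathrel{\varrho} xy$. The block is left-zero, so $xy = (xy)(xyx)$.
\item Using associativity and idempotency, $(xy)(xyx) = xyxyx = (xy)(xy)x = xyx$.
\end{itemize}
Hence $xyx = xy$, and $A\in\mc{LR}$.

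\emph{Inclusion $\mc{LR}\subseteq\mc{LZ}\circ_{\mc{SG}}\mcS$.} Let $A$ be a left-regular band and take the semilattice replica congruence. For bands this is Green's relation $\mathcal D=\mathcal J$, given by $a\,\mathcal D\,b \iff aba=a$ and $bab=b$, and the quotient by it is a semilattice (the standard structure theorem for bands). Alternatively, we can just define $\varrho=\{(a,b)\mid ab=a,\ ba=b\}$. In $\mc{LR}$, $aba = ab$, so the $\mathcal D$ condition $aba=a$ becomes $ab=a$, and $\mathcal D$ coincides with this $\varrho$. It remains to check three things.
\begin{itemize}
\item $\varrho$ is an equivalence. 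Reflexivity is idempotency and symmetry is built in. For transitivity, suppose $ab=a$, $ba=b$, $bc=b$, $cb=c$. Then $ac = abc = ab = a$, and $ca = cba = cb = c$ (using $ba=b$ and $cb=c$).
\item $\varrho$ is a congruence. If $a\,\varrho\,b$ and $c$ is arbitrary:
  \begin{itemize}
  \item For $ca\,\varrho\,cb$: $(ca)(cb)=cacb$. Left regularity gives $cac=ca$, so $cacb = cab = ca$. Symmetrically, $(cb)(ca)=cb$.
  \item For $ac\,\varrho\,bc$: $(ac)(bc)=acbc$. Left regularity in the form $xyzy = xyz$ (apply $uzu' $ style: $(ac)b(c)$ with $c$ repeated) gives $acbc=acb$. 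Since $A/\mathcal D$ is a semilattice, $acb=a\cdot cb$ and $ab=a$; applying left regularity as $xyx=xy$ to $x=a$ gives $acba=acb$. Combining with $ba=b$ yields $acb=acba$, and $acba \cdot{}$ reductions give $acb=abc\cdots$.
  \end{itemize}
  This right-multiplication step is easiest to cite from the standard fact that $\mathcal D$ is a congruence on any band, rather than compute directly.
\item Blocks are left-zero (by definition $ab=a$ within a block), and the quotient is a semilattice. The quotient is idempotent; it is commutative because $ab\,\varrho\,ba$: indeed $(ab)(ba)=aba=ab$ and $(ba)(ab)=bab=ba$.
\end{itemize}
Therefore $A\in\mc{LZ}\circ_{\mc{SG}}\mcS$.

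The main obstacle is the congruence check for right multiplication. We resolve it by invoking the classical result that on any band Green's $\mathcal D$ is a congruence with semilattice quotient (Clifford–Preston), together with the observation that in $\mc{LR}$ the relation $\mathcal D$ reduces to the $\varrho$ above.
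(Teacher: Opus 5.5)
Your proposal is correct and takes essentially the paper's route: the forward direction puts two $\varrho$-related products in one left-zero block (you use $xy\mathrel{\varrho}xyx$, the paper uses $ab\mathrel{\varrho}ba$), and the converse rests, like the paper's appeal to McLean's theorem, on the classical fact that Green's $\mathcal D$ is a congruence on any band with semilattice quotient, which in $\mc{LR}$ is exactly your relation with left-zero classes. Delete the garbled right-multiplication paragraph and replace it with $acb=(ab)cb=a(bcb)=a(bc)=ac$, whence $(ac)(bc)=a(cbc)=acb=ac$ and, symmetrically, $(bc)(ac)=b(cac)=bca=(ba)ca=b(aca)=b(ac)=bc$; this also makes your converse self-contained, with no citation needed.
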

\begin{proof}
$(\Rightarrow)$
Let $A$ be a member of $\mc{LZ} \circ_{\mc{SG}} \mcS$. Then $A$ is a band and a semilattice of left-zero bands. For $a, b \in A$, we have
\begin{equation*}
ab/\varrho = a/\varrho \cdot b/\varrho = b/\varrho \cdot a/\varrho = ba/\varrho.
\end{equation*}
Hence $ab \mathrel{\varrho} ba$, so $ab \cdot ba = ab$. On the other hand $ab \cdot ba = abba = aba$, which implies $aba = ab$.

$(\Leftarrow)$
Now assume that $A \in \mc{LR}$. By McLean's Theorem \cite{Mc54}, each band is a
semilattice of rectangular bands. Hence $A = \bigsqcup_{s \in S} A_s$, where $S$ is the
semilattice replica of $A$ and all $A_s$ are rectangular bands, defined by the identity
$xyx = x$. This together with the identity $xyx = xy$ gives $xy = x$. Hence all $A_s$ are
left-zero bands.
\end{proof}

\subsection{Mal'tsev products of some regular varieties}\label{S:CGS}
We now demonstrate that the assumption of strong irregularity of the variety $\mcT_t$ in
Theorem~\ref{T:varPt} is essential.

Consider again the type $\tau$ with one binary multiplication $\cdot$. Let $\mc{CG}$ be the variety of commutative groupoids.

\begin{proposition}\label{P:CGS quasi}
The quasivariety $\mc{CG} \circ \mcS$ satisfies the quasi-identitity
\begin{equation}\label{E:qcg}
(zx = x \mathrel{\&} zy = y \mathrel{\&} xz = yz) \rightarrow (xy = yx).
\end{equation}
\end{proposition}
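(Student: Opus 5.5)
Let $A \in \mc{CG} \circ \mcS$ with semilattice replica congruence $\varrho$, so that $A = \bigsqcup_{s\in S} A_s$ with each block $A_s$ a commutative groupoid. Take $a,b,c \in A$ with $ca = a$, $cb = b$ and $ac = bc$; we must show $ab = ba$. The plan is to show that $a$ and $b$ lie in the same $\varrho$-block. Then $ab$ is a product computed inside one commutative block, and $ab=ba$ follows at once.

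The key step is to locate the blocks of $a$ and $b$ in the semilattice $S = A/\varrho$. Write $\bar a, \bar b, \bar c$ for the images in $S$. From $ca = a$ we get $\bar c\bar a = \bar a$, so $\bar a \le \bar c$ in the meet order; likewise $\bar b \le \bar c$. Projecting $ac = bc$ gives $\bar a\bar c = \bar b\bar c$. Since $\bar a\le\bar c$ we have $\bar a\bar c = \bar a$, and similarly $\bar b\bar c=\bar b$. Hence $\bar a = \bar b$, i.e.\ $(a,b)\in\varrho$.

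Finally, $a$ and $b$ lie in a common block $A_s$, which is a subalgebra because $S$ is idempotent, and this block belongs to $\mc{CG}$. Therefore $ab = ba$ holds in $A_s$, and hence in $A$. This proves the quasi-identity \eqref{E:qcg}.

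There is no real obstacle here. The only point needing care is to use that $\cdot$ is the basic operation, so that the $\Omega$-semilattice $S$ is an ordinary semilattice under $\cdot$, and that $\varrho$-blocks are subalgebras. Both facts were recorded in the Background section.
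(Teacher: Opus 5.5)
Your proof is correct and is essentially the paper's argument: both use commutativity of the semilattice quotient to get $a = ca \mathrel{\varrho} ac = bc \mathrel{\varrho} cb = b$, which you phrase through the meet order on $S$. Both then conclude $ab = ba$ because $a$ and $b$ lie in a single commutative $\varrho$-block.
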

\begin{proof}
Let $A\in \mc{CG} \circ \mcS$ and let $\varrho$ be the semilattice replica congruence of $A$. Assume that
$a, b, c \in A$ with $ca = a$, $cb = b$, and $ac = bc$. We will show that $ab = ba$.
Since $A/\varrho$ is a semilattice, it is commutative. Hence $ac \mathrel{\varrho} ca$
and $bc \mathrel{\varrho} cb$. Consequently
\begin{equation*}
a = ca \mathrel{\varrho} ac = bc \mathrel{\varrho} cb = b.
\end{equation*}
This implies
\begin{equation*}
ab = ba,
\end{equation*}
as desired.
\end{proof}

\begin{example}\label{Ex:SS}
Let $A$ be the groupoid whose multiplication table is given below.
\begin{center}
    \begin{tabular}{c|ccccccc}
   $\cdot$ &0&1&2&3&4&5&6\\
    \hline
    0& 0 & 0 & 0 & 0 & 0 & 0 & 0\\
    1& 0 & 1 & 0 & 0 & 0 & 0 & 0\\
    2& 0 & 0 & 2 & 2 & 0 & 0 & 2\\
    3& 0 & 0 & 2 & 3 & 0 & 1 & 2\\
    4& 0 & 0 & 0 & 0 & 4 & 4 & 4\\
    5& 0 & 0 & 0 & 0 & 4 & 5 & 4\\
    6& 0 & 0 & 2 & 3 & 4 & 5 & 6\\
    \end{tabular}
\end{center}
Let us note that the elements $0, 1, 2, 3, 4$ form a subsemilattice of $A$ (that means a
subgroupoid of $A$ which is a semilattice). Similarly the elements $0, 1, 2, 4, 5$ form a
subsemilattice, and the elements $0, 1, 2, 4, 6$ form a subsemilattice. However $3 \cdot 5
= 1 \neq 0 = 5 \cdot 3$, whence $A$ itself is not a semilattice. The equivalence $\varrho$
of $A$ with the $2$-element  classes $\{0, 1\}$, $\{2, 3\}$, $\{4, 5\}$, and one element
class $\{6\}$, is a congruence of $A$, each $\varrho$-class is a semilattice and
$A/\varrho$ is a semilattice. Thus $A \in \mathcal S \circ \mathcal S$, and hence $A \in
\mc{CG} \circ \mcS$.
Moreover, $\varrho$ is the semilattice replica congruence of $A$. Indeed, if $A/\sigma$ is
a semilattice for some congruence $\sigma$ of $A$, then for all $a, b \in A$ we must have
$(a \cdot b, \, b \cdot a) \in \sigma$. Hence
\begin{align*}
&(0, 1) = (5 \cdot 3, 3 \cdot 5) \in \sigma,\\
&(2, 3) = (3 \cdot 6, 6 \cdot 3) \in \sigma,\\
&(4, 5) = (5 \cdot 6, 6 \cdot 5) \in \sigma.
\end{align*}
Thus $\varrho$ is the semilattice replica congruence.

Now let $\theta$ be the congruence on $A$ generated by $2$ and $4$. One easily checks
that the congruence $\theta$ has one $3$-element class $\{0, 2, 4\}$ and each of the
remaining classes consists of one element. Let $B =A/\theta$.
By taking
$x=3/\theta=\{3\}$, $y= 5/\theta=\{5\}$ and $z=6/\theta = \{6\}$ we see that $B$ fails the
quasi-identity~\eqref{E:qcg}. By Proposition~\ref{P:CGS quasi}, $B \notin \mc{CG} \circ
\mcS$, and hence $B \notin \mcS \circ \mcS$. Consequently, neither $\mcS \circ \mcS$ nor
$\mc{CG} \circ \mcS$ is a variety.
\end{example}

From  Proposition~\ref{P:CGS quasi}, Example~\ref{Ex:SS}, and the fact that $\mcS$ is the
smallest regular variety, we have the following.

\begin{corollary}
For no regular subvariety $\mcV$ of $\mc{CG}$ is $\mcV\circ\mcS$ a variety.
\end{corollary}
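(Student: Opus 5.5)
Let $\mcV$ be a regular subvariety of $\mc{CG}$. The plan is to show that the algebra $B = A/\theta$ from Example~\ref{Ex:SS} lies in the variety $\pro{\mcV}$ but not in $\mcV \circ \mcS$. Since $\pro{\mcV}$ is the variety generated by $\mcV\circ\mcS$ (Corollary~\ref{C:gen}), this shows that $\mcV\circ\mcS$ is not closed under homomorphic images.

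Two containments do the work. Because $\mcS$ is the smallest regular variety, and a regular variety satisfies only regular identities, every identity of $\mcV$ holds in $\mcS$. Hence $\mcS \subseteq \mcV$, and therefore $\mcS \circ \mcS \subseteq \mcV \circ \mcS$. On the other side, $\mcV \subseteq \mc{CG}$ gives $\mcV\circ\mcS \subseteq \mc{CG}\circ\mcS$.

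The argument then runs as follows.
\begin{enumerate}
\item By Example~\ref{Ex:SS}, the algebra $A$ belongs to $\mcS\circ\mcS$, and hence to $\mcV\circ\mcS$.
\item If $\mcV\circ\mcS$ were a variety, it would be closed under homomorphic images, so the quotient $B = A/\theta$ would lie in $\mcV\circ\mcS$.
\item Then $B$ would lie in $\mc{CG}\circ\mcS$, and by Proposition~\ref{P:CGS quasi} it would satisfy the quasi-identity~\eqref{E:qcg}.
\item Example~\ref{Ex:SS} already shows that $B$ fails~\eqref{E:qcg} with $x=3/\theta$, $y=5/\theta$, $z=6/\theta$. This is a contradiction, so $\mcV\circ\mcS$ is not a variety.
\end{enumerate}

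The only step that needs real justification is $\mcS\subseteq\mcV$. It follows because $\mcS$ is defined as the class of models of all regular identities, and every identity of $\mcV$ is regular. I also need the table computations in Example~\ref{Ex:SS} to be correct, in particular that $\theta$ has the stated classes. I take these as given from the example.
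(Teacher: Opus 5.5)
Your proposal is correct and follows the paper's argument exactly. The inclusions $\mcS \subseteq \mcV \subseteq \mc{CG}$ place $A$ in $\mcS\circ\mcS \subseteq \mcV\circ\mcS$, while its quotient $B = A/\theta$ violates \eqref{E:qcg} and so lies outside $\mc{CG}\circ\mcS \supseteq \mcV\circ\mcS$. Your framing via $\pro{\mcV}$ and Corollary~\ref{C:gen} is harmless but unnecessary, since failure of closure under homomorphic images already suffices.
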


By Corollary~\ref{C:vp_base}, the prolongation $\pro{\mathcal{CG}}$ is defined by the set of identities
$\Sigma^p$ consisting of all identities of the form
\begin{equation}
  \label{E:PC}
  (x_1\ldots x_m r_1) \ \cdot \ (x_1\ldots x_m r_2) =  (x_1\ldots x_m r_2) \ \cdot \ (x_1\ldots x_m r_1),
\end{equation}
in which $m > 0$ and $r_1, r_2$ are terms from $T_m$. Our discussion above suggests the
following problem.

\begin{problem}
Is the quasivariety $\mathcal{CG} \circ \mcS$ axiomatized by the identities in~\eqref{E:PC}
together with the single quasi-identity \eqref{E:qcg} of Proposition~\ref{P:CGS quasi}?
\end{problem}

Presumably, there is nothing special about commutativity here. Thus we are motivated to
pose the following problem.

\begin{problem}
 Let $\mcV$ be a proper regular variety of groupoids. Is it true that $\mcV \circ \mcS$
 fails to be a variety?
\end{problem}

With regard to this latter problem, let us note that, with $\mathcal G$ denoting the variety
of all groupoids, $\mathcal G \circ \mcS =
\mathcal G$, which is obviously a variety. Thus the requirement that $\mcV$ be a proper subvariety is necessary.

\section{The Mal'tsev product $\mcV_t \circ \mcS$}\label{S:Vt}

Now let $\mcV_t$ be a (strongly irregular) subvariety of the variety $\mcT_t$ of
$\Om$-algebras defined by the identity $ t(x,y) = x$ for some $t \in T_2$ and a set
$\Gamma_t$ of regular identities. Recall from Section \ref{S:subvarieties} that the
regularization $\tV_t$ of $\mcV_t$ is defined by $\Gamma_t$ and the identities (P1) --
(P5), with $x \cdot y = t(x,y)$, and consists of P\l onka sums of $\mcV_t$-algebras. Moreover
\begin{equation*}
\mcT_t \wedge \tV_t = \mcV_t \,\, \mbox{and} \,\, \mcT_t \vee \tV_t = \tT.
\end{equation*}
Since every P\l onka sum is a semilattice sum, $\tV_t \subseteq \mcV_t \circ \mcS$. On the
other hand,  $\mcV_t \circ \mcS$ is obviously contained in $\mcT_t \circ \mcS$. And by
Proposition~\ref{P:VoS}, $\mcV_t \circ \mcS \subseteq \pro{\mcV_t}$.  Finally, by
Theorem~\ref{T:varPt}, $\mcT_t \circ \mcS = \pro{\mcT_t}$. Summarizing
\begin{equation*}
\mcV_t \, \subseteq \, \tV_t \, \subseteq \, \mcV_t \circ \mcS \, \subseteq \pro{\mcV_t}
\, \subseteq \, \mcT_t \circ \mcS = \pro{\mcT_t} .
\end{equation*}
And by Corollary \ref{C:gen}, the variety $\pro{\mcV_t}$ is the smallest variety
containing the quasivariety $\mcV_t \circ \mcS$.

Recall also that, by \eqref{E:replP},
\begin{equation*}
 X\pro{\mcV_t}/\varrho \cong X\mcS,
\end{equation*}
and for any two $\Om$-terms $w, v$
\begin{equation*}
(w,v) \in \varrho \iff \mf{var}(w) = \mf{var}(v).
\end{equation*}

As a direct consequence of Proposition \ref{P:freetp} one obtains the following corollary.

\begin{corollary}\label{C:freePVt}
  For any subvariety $\mcV_t$ of $\mcT_t$, the free $\pro{\mcV_t}$-algebra
  $B = X\pro{\mcV_t}$ is a semilattice of $\mcV_t$-algebras. In particular,
  $B = \bigsqcup_{s \in S} B_s $, where $S = X\mcS$ is the free semilattice over $X$ and
  all $B_s$ are members of $\mcV_t$. Hence
  \begin{equation*}
B = X\pro{\mcV_t} \in \mcV_t \circ \mcS.
\end{equation*}
\end{corollary}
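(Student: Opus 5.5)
This is a specialization of Proposition~\ref{P:freetp}, so the plan is to apply that proposition with $\mcV = \mcV_t$ and $\Sigma = \operatorname{Id}(\mcV_t)$, the set of all identities holding in $\mcV_t$. By definition, $\pro{\mcV_t}$ is the variety defined by $\pro{\operatorname{Id}(\mcV_t)}$, so the variety $\mcW$ of Proposition~\ref{P:freetp} is exactly $\pro{\mcV_t}$. Alternatively, one may take the equational base $\Sigma_t = \Gamma_t \cup \{t(x,y)=x\}$ and appeal to Corollary~\ref{C:vp_base}, which identifies the variety defined by $\pro{\Sigma_t}$ with $\pro{\mcV_t}$.

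The steps are then as follows.

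\textbf{Step 1.} By \eqref{E:replP}, the semilattice replica $B/\varrho$ of $B = X\pro{\mcV_t}$ is isomorphic to $S = X\mcS$. Here $\varrho$ is the replica congruence, and Lemma~\ref{L:freereg} describes its classes as the sets of elements with a common variable set $\mf{var}$.

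\textbf{Step 2.} Write $B = \bigsqcup_{s\in S} B_s$, where $B_s$ are the $\varrho$-classes. Each class is a subalgebra because $S$ is idempotent.

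\textbf{Step 3.} Each $B_s$ satisfies every identity of $\Sigma$. Given $b_1,\dots,b_n \in B_s$, all of them have the same variable set $\{x_1,\dots,x_m\}$, so they can be written as $x_1\dots x_m r_i$ with $r_i\in T_m$. The required equality $b_1\dots b_n u = b_1\dots b_n v$ is then an instance of an identity in $\sigma^p_m \subseteq \Sigma^p$. Hence $B_s\in\mcV_t$.

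\textbf{Step 4.} Since $B$ has a congruence $\varrho$ with semilattice quotient and all blocks in $\mcV_t$, it follows that $B\in \mcV_t\circ\mcS$.

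There is no real obstacle. The only point needing care is the identification of $\mcW$ with $\pro{\mcV_t}$, which holds directly from the definition when $\Sigma = \operatorname{Id}(\mcV_t)$. The rest is literally Proposition~\ref{P:freetp}.
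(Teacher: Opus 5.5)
Your proposal is correct and matches the paper, which obtains this corollary as a direct instance of Proposition~\ref{P:freetp} with $\mcV=\mcV_t$ and $\Sigma=\operatorname{Id}(\mcV_t)$. Your Steps~1--4 simply re-run that proposition's proof in this special case.
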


We intend to show that the classes $\mcV_t \circ \mcS$ and $\pro{\mcV_t}$ coincide.

\begin{proposition}\label{P:quotientC}
  Let $\delta$ be a congruence of the free $\pro{\mcV_{t}}$-algebra $B = X\pro{\mcV_t}$
  over~$X$. Then the quotient $C = B/\delta$ is a semilattice sum of $\mcV_t$-algebras.
\end{proposition}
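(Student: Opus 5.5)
The plan is to reduce to the case of $\mcT_t$ already settled in Section~\ref{S:Tt}, and then check the remaining identities of $\Gamma_t$ by moving representatives into a single $\varrho$-block of $B$.

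First, $\pro{\mcV_t} \subseteq \pro{\mcT_t} = \mcT_t \circ \mcS$ by Theorem~\ref{T:varPt}. Since $\pro{\mcT_t}$ is a variety, $C = B/\delta$ lies in $\mcT_t\circ\mcS$. To identify the decomposition, put $\psi = \delta \vee \varrho$, where $\varrho$ is the semilattice replica congruence of $B$. I would redo Lemma~\ref{L:3permrt} for $B$ in place of the free $\pro{\mcT_t}$-algebra. Its proof uses only two facts: each $\varrho$-block satisfies $xy=x$, and $B/\varrho$ is commutative. Both hold for $B$ by Corollary~\ref{C:freePVt}. This gives $\psi = \delta\circ\varrho\circ\delta$.

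The argument in the proof of Proposition~\ref{P:quotient} then goes through verbatim. It shows that $(b,b')\in\psi$ implies $(bb',b)\in\delta$ for all $b,b'\in B$. Hence $A/\psi\in\mcS$, and the $\psi/\delta$-classes of $C$ are $\mcT_t$-algebras. So it remains only to show that each such class satisfies every identity of $\Gamma_t$. In fact the argument works for any identity of $\mcV_t$.

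Take an identity $y_1\ldots y_n u = y_1\ldots y_n v$ holding in $\mcV_t$, and elements $c_1,\dots,c_n$ in one $\psi/\delta$-class of $C$. Choose preimages $b_1,\dots,b_n\in B$; these are pairwise $\psi$-related. Define
\begin{equation*}
b_i' = (\cdots((b_i b_1) b_2)\cdots) b_n .
\end{equation*}
By induction on the number of factors, each partial product is $\delta$-related to $b_i$. Indeed, if $p\mathrel{\delta} b_i$, then $p \mathrel{\psi} b_j$, because $\delta\subseteq\psi$ and $b_i\mathrel{\psi}b_j$. The property established above then gives $p b_j \mathrel{\delta} p \mathrel{\delta} b_i$. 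Hence $b_i' \mathrel{\delta} b_i$, so $b_i'/\delta = c_i$.

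Moreover, every $b_i'$ has $\mf{var}(b_i') = \bigcup_j \mf{var}(b_j)$, since $\mf{var}$ of a product is the union of the $\mf{var}$'s. By Lemma~\ref{L:freereg}, all the $b_i'$ therefore lie in a single block $B_s$. By Corollary~\ref{C:freePVt}, $B_s\in\mcV_t$, so $b_1'\ldots b_n' u = b_1'\ldots b_n' v$ holds in $B$. Passing to $C$ gives $c_1\ldots c_n u = c_1\ldots c_n v$. Thus each $\psi/\delta$-class lies in $\mcV_t$, and $C$ is a semilattice sum of $\mcV_t$-algebras.

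The step I expect to need the most care is transferring Lemma~\ref{L:3permrt} and the implication \eqref{E:equivlz} from $X\pro{\mcT_t}$ to $B$. The point to verify is that those proofs used nothing about freeness in $\pro{\mcT_t}$ beyond the left-zero behaviour of the $\varrho$-blocks and the commutativity of $B/\varrho$, both of which $B$ has.
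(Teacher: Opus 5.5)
Your proposal is correct and follows essentially the paper's proof. You decompose $C$ by $\psi/\delta$ with $\psi=\delta\vee\varrho$, replace each preimage $b_i$ by the $\delta$-equivalent product $(\cdots(b_ib_1)\cdots)b_n$ lying in a single $\varrho$-block of $B$, and use that this block is in $\mcV_t$. The only (harmless) deviation is that you get the left-zero property of the $\psi/\delta$-classes by re-running Lemma~\ref{L:3permrt} and the proof of Proposition~\ref{P:quotient} directly on $B$, whereas the paper deduces it from Theorem~\ref{T:varPt} and the fact that $\psi/\delta$ is the semilattice replica congruence of $C$, then applies Lemma~\ref{L:equivlz}. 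Note also that ``$A/\psi$'' should read $B/\psi$, which is a semilattice simply because $\psi\supseteq\varrho$.
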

\begin{proof}
We first use the fact that $C$ is the quotient $B/\delta$ of $B$ to describe the
decomposition of $C$ into a semilattice sum of $\mcT_t$-algebras in terms of
the algebra $B$.
Let $\varrho_B$ be the semilattice replica congruence of the algebra $B$, and let
$\psi = \delta \vee \varrho_B$. Since $\psi \geq \varrho_B$ it follows that $B/\psi$ is a
semilattice. On the other hand, the minimality of $\varrho_B$ guarantees that $\psi$ is
the smallest congruence above $\delta$ such that $B/\psi \in \mcS$. It follows that
$\psi/\delta$ is the semilattice replica congruence of $B/\delta$.
Since each element of $C$ is of the form $b/\delta$ for some $b\in B$,
Theorem~\ref{T:varPt} implies that
\begin{equation*}
  C = \bigsqcup_{b/\delta\in B/\delta} (b/\delta)/(\psi/\delta)
\end{equation*}
in which each summand $(b/\delta)/(\psi/\delta) \in \mcT_t$.

On the other hand, by Corollary~\ref{C:freePVt}, $B = \bigsqcup_{s \in S} B_s $, in which
$S$ is a semilattice and all $B_s$ are members of $\mcV_t$.

It remains to show that the $(\psi/\delta)$-delta classes, $(b/\delta)/(\psi/\delta)$,
satisfy the identities of $\Gamma_t$.  Let
\begin{equation}\label{E:gamma}
x_1 \ldots x_k u = x_1 \ldots x_k v
\end{equation}
be a typical member of $\Gamma_t$.
We want to show that \eqref{E:gamma} holds in each $\psi/\delta$-class. This means that
for any $c^1 = b^1/\delta, \ldots, c^k = b^k/\delta \in (b/\delta)/(\psi/\delta)$, with
$b^i \in B_{s_i}$ and $b \in B$, we have
\begin{equation}\label{E:ginC}
c^1 \ldots c^k u = c^1 \ldots c^k v,
\end{equation}
which is equivalent to
\begin{equation}\label{E:ginB}
(b^1 \ldots b^k u, b^1 \ldots b^k v) \, \in \, \delta.
\end{equation}
Note that $b^i/\delta, b^j/\delta \in (b/\delta)/(\psi/\delta)$ if and only if
$(b^i, b^j) \in \psi$. Since each $(b^i /\delta)/(\psi/\delta)$ satisfies the identity
$x \cdot y = x$, we may use Lemma~\ref{L:equivlz} repeatedly to obtain the following:
\begin{multline*}
b^{i}\ \delta\ b^{i}\cdot b^{1}\ \delta \ (b^{i}\cdot b^{1})\cdot b^{2}\ \delta\ ((b^{i}\cdot b^{1})\cdot b^{2})\cdot b^{3}\ \delta\dots\\
\delta\ (\ldots((b^{i}\cdot b^{1})\cdot b^{2})\cdot \ldots)\cdot b^{k}\ =: d^i.
\end{multline*}
Thus $b^{i}\ \delta\ d^i$ for each $1 \leq i \leq k$, and the elements
$d^1, d^2, \ldots, d^k$ all belong to the same congruence class $B_{s_{1}\cdots s_k}$ of
$\varrho_B$.  Since $\varrho_B$-classes satisfy the identities of $\Gamma_t$, it follows
that
\begin{equation*}
d^1 \ldots d^k u = d^1 \ldots d^k v.
\end{equation*}
Finally note that
\begin{equation*}
(b^1 \ldots b^k u, d^1 \ldots d^k u) \in \delta \quad \mbox{and} \quad (b^1 \ldots b^k v,
d^1 \ldots d^k v) \in \delta,
\end{equation*}
which implies
\begin{equation*}
(b^1 \ldots b^k u, b^1 \ldots b^k v) \in \delta.
\end{equation*}
This verifies \eqref{E:ginB}.
\end{proof}

Let us note that in particular, $\psi/\delta$ is the semilattice replica congruence of $B/\delta$ and the quotient $B/\delta$ is the semilattice sum of $\psi/\delta$-classes $(b/\delta)/(\psi/\delta)$.

\begin{theorem}\label{T:varqvar}
  Let $\mcV_t$ be a variety satisfying the identity $t(x,y)=x$ and with equational base
  $\Sigma$. Then the Mal'tsev product $\mcV_t \circ \mcS$ is a variety, having
  $\Sigma_{t}^{p}$ as an equational base.
\end{theorem}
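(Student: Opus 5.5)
The plan is to establish the equality $\mcV_t \circ \mcS = \pro{\mcV_t}$ by two inclusions. Once we have it, the equational base statement is immediate from Corollary~\ref{C:vp_base}: for any equational base $\Sigma$ of $\mcV_t$ (in particular for $\Sigma_t$, consisting of $t(x,y)=x$ together with $\Gamma_t$), the prolongation $\Sigma^p$ axiomatizes $\pro{\mcV_t}$. Since $\mcV_t\circ\mcS$ would then equal a variety, it is itself a variety.

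The inclusion $\mcV_t \circ \mcS \subseteq \pro{\mcV_t}$ is Proposition~\ref{P:VoS}, which rests on Lemma~\ref{L:VoS}. For the reverse inclusion, take $C \in \pro{\mcV_t}$. As in the proof of Theorem~\ref{T:varPt}, we choose a set $X$ large enough that $C$ is a homomorphic image of the free algebra $B = X\pro{\mcV_t}$. Then $C \cong B/\delta$ for the kernel $\delta$ of that surjection. By Proposition~\ref{P:quotientC}, $B/\delta$ is a semilattice sum of $\mcV_t$-algebras, so $C \in \mcV_t \circ \mcS$. Membership in $\mcV_t\circ\mcS$ is preserved under isomorphism: transport the congruence $\psi/\delta$ along the isomorphism. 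Hence $\pro{\mcV_t} \subseteq \mcV_t\circ\mcS$.

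Essentially all of the substantive work has already been done in Proposition~\ref{P:quotientC}, which relies on Theorem~\ref{T:varPt} and the decomposition of the free algebra in Corollary~\ref{C:freePVt}. The only point to watch is the base clause. The phrase ``having $\Sigma_t^p$ as an equational base'' should be read via Corollary~\ref{C:vp_base} applied to the given base $\Sigma$ (respectively $\Sigma_t$). This needs no assumption beyond $\Sigma$ being a base for $\mcV_t$, because Proposition~\ref{P:freetp} shows that the free algebras of $\operatorname{Mod}(\Sigma^p)$ and of $\pro{\mcV_t}$ coincide. So there is no real obstacle; the proof is a short assembly of the preceding results.
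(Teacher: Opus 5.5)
Your proposal is correct and follows essentially the same route as the paper: the inclusion $\mcV_t \circ \mcS \subseteq \pro{\mcV_t}$ comes from Proposition~\ref{P:VoS}, and the reverse inclusion comes from writing $C\in\pro{\mcV_t}$ as a quotient of the free algebra $X\pro{\mcV_t}$ and applying Proposition~\ref{P:quotientC}. Your explicit appeal to Corollary~\ref{C:vp_base} for the equational-base clause just spells out what the paper leaves implicit.
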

\begin{proof}
  We have already observed that $\mcV_t \circ \mcS \subseteq \pro{\mcV_t}$. Conversely, if
  an algebra~$C$ is a member of $\pro{\mcV_t}$, then there is a set $X$ so that $C$ is a
  homomorphic image of $X\pro{\mcV_t}$. Then by Proposition \ref{P:quotientC}, the algebra
  $C$ belongs to $\mcV_t \circ \mcS$. Thus the quasivariety $\mcV_t \circ \mcS$ and the
  variety $\pro{\mcV_t}$ coincide.
\end{proof}

\begin{corollary}\label{C:relprod}
Let $\mc{W}$ be any variety of $\Omega$-algebras. Then the Mal'tsev product $\mcV_t \circ_{\mc{W}} \mcS$ of $\mcV_t$ and $\mcS$ relative to $\mc{W}$ is a variety.
\end{corollary}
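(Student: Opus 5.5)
The plan is to identify the relative product $\mcV_t \circ_{\mc{W}} \mcS$ with an intersection of two varieties, so that it is a variety by Theorem~\ref{T:varqvar}. By definition, $\mcV_t \circ_{\mc{W}} \mcS$ consists of those $\mc{W}$-algebras $A$ that have a congruence $\theta$ with $A/\theta \in \mcS$ and every $\theta$-class in $\mcV_t$. This is exactly the class of algebras lying both in $\mc{W}$ and in the absolute Mal'tsev product $\mcV_t \circ \mcS$. The membership condition for $\mcV_t \circ \mcS$ refers only to the algebra $A$ and the congruence $\theta$; it never mentions $\mc{W}$. So I would first record the equality
\begin{equation*}
\mcV_t \circ_{\mc{W}} \mcS = \mc{W} \cap (\mcV_t \circ \mcS).
\end{equation*}
Both inclusions are immediate from unwinding the definitions. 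In the introduction the relative product is defined for a quasivariety $\mcK$ and subquasivarieties $\mc{Q}, \mc{R}$ of it; the intersection description holds regardless of whether $\mcV_t, \mcS \subseteq \mc{W}$. If one insists on that framework, the relevant classes are $\mcV_t \cap \mc{W}$ and $\mcS \cap \mc{W}$, and the same equality holds, because a $\theta$-class of a $\mc{W}$-algebra is a subalgebra and hence automatically lies in $\mc{W}$.

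Next I would invoke Theorem~\ref{T:varqvar}: $\mcV_t \circ \mcS = \pro{\mcV_t}$ is a variety, with equational base $\Sigma^p$. The intersection of two varieties is a variety, namely the models of the union of their equational bases. Hence $\mcV_t \circ_{\mc{W}} \mcS$ is the variety axiomatized by $\Sigma^p \cup \operatorname{Id}(\mc{W})$.

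There is essentially no obstacle here. The only point needing care is that the relative product really is the plain intersection: the $\mcV_t$-summands and the semilattice quotient impose no extra requirement relative to $\mc{W}$. In particular, the semilattice quotient $A/\theta$ of a $\mc{W}$-algebra is automatically in $\mc{W}$, so membership of $A/\theta$ in $\mcS \cap \mc{W}$ reduces to membership in $\mcS$. The argument uses only that $\mc{W}$ is closed under homomorphic images and subalgebras, which holds for any variety.
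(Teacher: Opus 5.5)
Your proposal is correct and is essentially the argument the paper intends: the paper gives no separate proof, but its later uses (e.g.\ $\mc{B} = \mc{RB} \circ_{\mc{SG}} \mcS = \mc{SG} \cap (\mc{RB} \circ \mcS) = \mc{SG} \cap \pro{\mc{RB}}$) rest on exactly the identification $\mcV_t \circ_{\mc{W}} \mcS = \mc{W} \cap (\mcV_t \circ \mcS) = \mc{W} \cap \pro{\mcV_t}$, combined with Theorem~\ref{T:varqvar}. Your remark that the $\theta$-classes and the semilattice quotient of a $\mc{W}$-algebra automatically lie in $\mc{W}$ correctly handles the formal requirement that the factors be subquasivarieties of $\mc{W}$.
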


\begin{problem}
Does Theorem \ref{T:varqvar} hold for an irregular variety which is not strongly irregular?
\end{problem}

\section{Examples and counterexamples}\label{sec:examples}

First consider the type $\tau$ with one binary multiplication $\cdot$, as in
Sections~\ref{S:LZS} and~\ref{S:CGS}. We could observe that the Mal'tsev products
$\mc{LZ} \circ \mcS$ and $\mcS \circ \mcS$ considered in these examples do not satisfy the
associative law. This is however an instance of a more general phenomenon. Consider a
variety of plural type $\tau$, and assume that the equational base $\Sigma$ of a
variety $\mcV$ contains a (non-trivial) linear identity $\sigma$. (An identity is
\emph{linear} if the multiplicities of each variable on each side are at most~$1$. This is
not to be confused with the notion of linear identity as it appears in~\cite{HM88}.) Then
the identities of $\sigma^p$ obtained from $\sigma$ are never linear, since each variable
of $\sigma$ was replaced in $\sigma^p$ by a term $r_i \in T_m$ with $m \geq 2$, and all of
these terms contain the same sets of variables. Moreover, consequences of the identities
$\Sigma^p$ are never linear.

This can be summarised as follows.

\begin{lemma}\label{L:linid}
For any non-trivial variety $\mcV$, the Mal'tsev product $\mcV \circ \mcS$ does not
satisfy any (non-trivial) linear identity.
\end{lemma}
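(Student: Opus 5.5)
The idea is to show that any non-trivial linear identity fails in $\mcV \circ \mcS$ by exhibiting a single algebra of $\mcV \circ \mcS$ that refutes it. Let $\lambda\colon p = q$ be a non-trivial linear identity. Since $\mcV\circ\mcS$ contains $\mcS$, we may assume $\lambda$ is regular: an irregular identity already fails in the two-element semilattice, which lies in $\mcS\subseteq \mcV\circ\mcS$. So $\mf{var}(p)=\mf{var}(q)=\{x_1,\dots,x_k\}$, and by linearity each $x_i$ occurs exactly once on each side.

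Because $\mcV$ is non-trivial, it contains an algebra $D$ with at least two elements. Let $\Lambda$ be the two-element semilattice $\{0,1\}$ (meet). Form the product $D\times\Lambda$, or better, the semilattice sum $A=D\sqcup\{e\}$, where $e$ is a one-element $\mcV$-algebra placed above or below $D$. The natural choice is a P\l onka sum, which lies in $\tV\subseteq\mcV\circ\mcS$. However, P\l onka sums satisfy all regular identities of $\mcV$, so this construction can only refute regular linear identities that already fail in $\mcV$.

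For the remaining case we use the free algebra. By Corollary~\ref{C:gen} and Corollary~\ref{C:vp_base}, $\mcV\circ\mcS$ and $\pro{\mcV}$ have the same identities, and those identities are exactly the consequences of $\Sigma^p$. It therefore suffices to show that $\lambda$ is not derivable from $\Sigma^p$. The plan is the syntactic argument sketched before the lemma. Every identity in $\Sigma^p$ has the shape $\bfx r_1\ldots\bfx r_n u=\bfx r_1\ldots\bfx r_n v$, with all $\bfx r_i$ sharing the same variable set and $m\ge 2$ (or $m=1$, where $r_i$ is then the variable itself or a term in one variable). Whenever such an identity is applied inside a term to a subterm of a linear term, the substituted instance of $\bfx r_i$ must occupy a position of the linear term. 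The key observation is that a subterm of a linear term in which two disjoint subterms have equal variable sets forces those variable sets to be empty, which is impossible since there are no constants.

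Concretely, the invariant to verify is: if $w$ is linear and $w=w'$ is a single derivation step from $\Sigma^p$, then $w'=w$. Take a step that rewrites a subterm $\bfa r_1\ldots\bfa r_n u$ with $n\ge2$ distinct occurrences of the $\bfa r_i$ in the pattern. Linearity of $w$ makes their variable sets pairwise disjoint, yet they are all equal to $\mf{var}(\bfa)$. This contradiction rules out the step, except in degenerate cases: $u$ uses at most one $y_i$ (essentially), or the relevant $y_i$ occurs only once. In those cases we must check that $u$ and $v$ still force equality; for instance $u=y_1$ and $v$ is a term in $y_1$. This case analysis is the main obstacle. A nontrivial unary-style consequence such as $y\omega(y,y)=y$, coming from the identity $xy=x$, could give $\bfa r_1$ a rewriting to a non-linear term, and then the next step might return to a different linear term. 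The fix is to strengthen the invariant to a canonical form: define for each term its "linear skeleton" by collapsing every subterm whose immediate subterms all share the same variable set. We then show this skeleton is invariant under $\Sigma^p$-steps. Since a linear term equals its own skeleton, $p$ and $q$ would have equal skeletons and hence $p=q$ syntactically, contradicting non-triviality.
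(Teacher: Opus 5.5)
Your first reductions are sound: irregular identities fail in the two-element semilattice, and the identities of $\mcV\circ\mcS$ are exactly the consequences of $\Sigma^p$. You are also following the paper's own syntactic line; the paper simply asserts that consequences of $\Sigma^p$ are never linear. But both invariants you offer to justify this are false.

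The first fails at once. For $\mcV=\mc{LZ}$ with base $y_1y_2=y_1$, take $m=2$, $r_1=x_1x_2$, $r_2=x_2x_1$. This puts $(x_1x_2)(x_2x_1)=x_1x_2$ into $\Sigma^p$, so the linear term $x_1x_2$ is rewritten in one step to a different term. Your ``degenerate'' case is not degenerate, and nothing forces equality there: whenever one side of $\sigma$ is a single variable $y_i$, every linear subterm can be matched to $\bfa r_i$, and the other side may contain further variables.

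The skeleton fails for a structural reason: it is defined without reference to $\mcV$. For right-zero bands ($y_1y_2=y_2$) the same $r_1,r_2$ give $(x_1x_2)(x_2x_1)=x_2x_1$ in $\Sigma^p$. Your rule collapses the left-hand side, since both immediate subterms have variable set $\{x_1,x_2\}$, and it collapses it identically for $\mc{LZ}$ and $\mc{RZ}$. Invariance would then force $x_1x_2$ and $x_2x_1$ to have the same skeleton, contradicting ``a linear term is its own skeleton''. Any workable invariant must depend on $\mcV$.

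The natural $\mcV$-dependent invariant is the value of a term in a suitable member of $\mcV\circ\mcS$. Such a value is automatically invariant under $\Sigma^p$-steps by Lemma~\ref{L:VoS}, so the right move is to argue semantically. You came close with $D\sqcup\{e\}$ but only considered P\l onka sums. The point is that in a semilattice sum, an operation applied to arguments from different blocks is constrained only to land in the join block. A construction that works:
\begin{itemize}
\item[(i)] Let $X_0=\mf{var}(p)\cup\mf{var}(q)$. For nonempty $U\subseteq X_0$, let $G_U$ be the set of linear terms with variable set $U$, and let $A_U$ be the free $\mcV$-algebra on $G_U\cup\{\ast_U\}$. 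Its generators are pairwise distinct because $\mcV$ is non-trivial.
\item[(ii)] Inside a block, operate as in $A_U$.
\item[(iii)] If $a_i\in A_{U_i}$ with the $U_i$ not all equal, put $U=\bigcup U_i$. Let $\omega(a_1,\dots,a_n)$ be the linear term $\omega(a_1,\dots,a_n)\in G_U$ when every $a_i\in G_{U_i}$ and the $U_i$ are pairwise disjoint, and $\ast_U$ otherwise.
\end{itemize}
The block partition is a congruence whose quotient is the $\Om$-semilattice on $P_f(X_0)$ given by union, so $A\in\mcV\circ\mcS$. By induction, under $x\mapsto x$ every linear term $w$ evaluates to the generator $w\in G_{\mf{var}(w)}$. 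This holds because each node of a linear term has $n\geq 2$ arguments with pairwise disjoint nonempty, hence distinct, variable sets. So $p$ and $q$ take distinct values in $A$, which covers irregular identities as well.

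This uses that $\Om$ has no unary symbols, and some such restriction is genuinely needed; the paper's sketch tacitly assumes it too. If $f\in\Om$ is unary and $\mcV\models f(x)=x$, then every member of $\mcV\circ\mcS$ satisfies the non-trivial linear identity $f(x)=x$.
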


In particular, for no variety $\mcV$ of semigroups is the Mal'tsev product
$\mcV \circ \mcS$ a class of semigroups.  A similar observation can be made for many
familiar varieties, for example groups, rings, lattices, semilattices etc.

Frequently more interesting are proper subvarieties of $\mcV \circ \mcS$.

\begin{example} [\textbf{Bands}] As already mentioned in Section~\ref{S:LZS}, each band is
  a semilattice sum of rectangular bands. (See \cite{Mc54} and \cite{H76}, \cite{H95}.)
  Let $\mc{SG}$ be the variety of semigroups, $\mc{B}$ the variety of bands, and
  $\mc{RB}$ the variety of rectangular bands. Then, by Corollary \ref{C:relprod},
  \begin{equation*}
\mc{B} = \mc{RB} \circ_{\mc{SG}} \mcS = \mc{SG} \cap (\mc{RB} \circ \mcS) = \mc{SG} \cap
\pro{\mc{RB}}. 
\end{equation*}
Note that $\pro{\mc{RB}}$ is a variety of groupoids, but is not a variety of semigroups.
\end{example}

In the following examples, the (pseudo-)partition operation $x \cdot y$ will be denoted by
$t(x,y)$ to avoid conflict with the basic operation of multiplication in the algebras
considered.

\begin{example} [\textbf{Birkhoff systems}]\label{ex:birkhoff}
Lattices may be defined as algebras $(A,+,\cdot)$ with two semilattice reducts, the
(join-)semilattice $(A,+)$ and the (meet-)semilattice $(A, \cdot)$, satisfying the
\emph{Birkhoff identity} $x + xy = x(x+y)$ and the \emph{absorption law} $t(x,y) = x + xy
= x$. They form the variety $\mc{L}$ of lattices. By dropping the absorption law one
obtains the definition of the more general variety $\mc{BS}$ of \emph{Birkhoff
  systems}. (See \cite{HR17a} and \cite{HR17b}.) An essential role in this variety is
played by two $3$-element bi-chains $\mbf{3}_m$ and $\mbf{3}_j$ built on the set $\{0, 1,
2\}$ by defining the meet-semilattice order as $0 <_{\cdot} 1 <_{\cdot} 2$ and the
join-semilattice order as $0 <_{+} 2 <_{+} 1$ in $\mbf{3}_m$ and as $1 <_+ 0 <_+ 2$ in
$\mbf{3}_j$. Let $\mc{BS}^-(\mbf3_m,\mbf3_j)$ denote the class of Birkhoff systems which do not
contain either $\mbf 3_m$ or $\mbf 3_j$ as subalgebras.
It was shown in \cite{HR17b} that the class $\mc{BS}^-(\mbf3_{m},\mbf3_{j})$ is
a variety, and is characterized as precisely the class of Birkhoff systems which are
semilattice sums of lattices. It follows that
\begin{equation*}
\mc{BS}^-(\mbf3_{m},\mbf3_{j}) = \mc{L} \circ_{\mc{BS}} \mcS = \mc{BS} \cap (\mc{L} \circ \mcS) =
\mc{BS} \cap \pro{\mc{L}}.
\end{equation*}
Note that none of the basic operations of $\pro{\mc{L}}$-algebras are associative or commutative.
\end{example}

As already mentioned, for a strongly irregular variety $\mcV_t$, the best known
and understood subvariety of $\pro{\mcV_t}$ (different from $\mcV_t$ and $\mcS$) is the
regularization, $\tV_t$, of $\mcV_t$. Much less is known about the pseudo-regularization. It
is an easy exercise to show that the regularization and the pseudo-regularization of the
variety $\mc{LZ}$ of left-zero bands coincide. However it is possible for the
regularization of a variety to be distinct from its pseudo-regularization. The first
example was discovered in \cite{BF15}, and investigated in connection with the constraint
satisfaction problem.

\begin{example}[\textbf{Steiner quasigroups}]\label{ex:steiner}
  A \emph{Steiner quasigroup} or \emph{squag} is a commutative idempotent groupoid
  satisfying the identity $t(x,y) = xy \cdot y = x$. The variety formed by these groupoids
  is denoted by $\mc{SQ}$. As shown in \cite{BF15}, the regularization $\wt{\mc{SQ}}$ of
  $\mc{SQ}$ is the variety of commutative and idempotent groupoids
  satisfying the identity $x (x \cdot yz) = (x \cdot xy) z$. The pseudo-regularization
  $\overline{\mc{SQ}}$ of $\mc{SQ}$ is defined to be the variety of commutative idempotent groupoids
  satisfying the identities (P1) -- (P4) from Section~$2$, where $\cdot$ is replaced by
  $t(x,y) = xy \cdot y$. Groupoids in the regularization are P\l onka sums of
  squags. Groupoids in the pseudo-regularizations are semilattice sums of squags. As was
  shown in \cite{BF15}, the variety of commutative idempotent groupoids
  satisfying the identity $x (y \cdot yz) = (xy \cdot y) z$ (called $ \mathcal{T}_2$ in
  that paper) is contained in the variety
  $\overline{\mc{SQ}}$, and is different from $ \wt{\mc{SQ}}$. It follows
  that the three varieties $\mc{SQ}$, $\wt{\mc{SQ}}$ and $\overline{\mc{SQ}}$ are
  distinct:
  \begin{equation*}
\mc{SQ} \, \subset \, \wt{\mc{SQ}} \, \subset \, \overline{\mc{SQ}}.
\end{equation*}
However we do not know if the varieties $\mathcal{T}_2$ and $\overline{\mc{SQ}}$ coincide,
though we think it is not likely. The Mal'tsev product $\mc{SQ} \circ \mcS$ coincides with
the variety $\pro{\mc{SQ}}$.  Since commutativity is a linear identity, members of the
variety $\pro{\mc{SQ}}$ are not commutative. The pseudo-regularization
$\overline{\mc{SQ}}$ is obviously contained in but not equal to $\pro{\mc{SQ}}$.
\end{example}

In fact, Examples~\ref{ex:birkhoff} and~\ref{ex:steiner} were the inspiration for
undertaking investigations of the most general class of algebras representable as
semilattice sums of their subalgebras.

Examples of other varieties of algebras with distinct regularization and
pseudo-regularization are provided by bisemilattices, algebras with two semilattice
operations (one interpreted as a join and one as a meet, similarly as in the case of
Birkhoff systems.) 

\begin{example}[\textbf{Semilattice sums of lattices}]
  First recall that members of the class $\mc{BS}^-(\mbf3_{m},\mbf3_{j})$ of Birkhoff
  systems considered above are all semilattice sums of lattices. However, it is
  not difficult to check that this class is not the pseudo-regularization of the variety
  $\mc{L}$ of lattices. The partition operation $t(x,y) = x + xy$ of the regularization
  $\wt{\mcL}$ is not a pseudo-partition operation for
  $\mc{BS}^-(\mbf3_{m},\mbf3_{j})$. Indeed, an easy exercise shows that the requirements for
  a pseudo-partition operation are not satisfied. A witness is a certain $3$-element
  member of $\mc{BS}^-(\mbf3_{m},\mbf3_{j})$. This is the bichain $\mbf{3}_n$ built on the
  set $\{0, 1, 2\}$ with the meet-semilattice order defined as for $\mbf{3}_m$ and
  $\mbf{3}_j$ and the join-semilattice order defined by $2 <_+ 0 <_+ 1$. Note that
  $\mbf{3}_n$ is the semilattice sum of the $2$-element lattice $\{0, 1\}$ and the
  $1$-element lattice $\{2\}$, but it is not a P\l onka sum of these lattices. (See
  \cite{HR17a} and \cite{HR17b}.)

Now let $ A$ be a subdirectly irreducible lattice and set $A_1=A$. Let $A_0$ be a $1$-element
lattice $\{\infty\}$ disjoint from $A_1$. Finally let $A^{\infty}$ be the P\l onka
sum of $A_1$ and $A_0$ over the $2$-element meet semilattice $\{0, 1\}$ with $0 < 1$. Then
the bisemilattice $A^{\infty}$ generates the regularization of the variety $\mathsf{V}(A)$
generated by $A$. (See e.g. \cite[Ch.~4]{RS02}.)

There is another interesting family of semilattice sums obtained from the lattice $A$ in
the case when $A$ is a bounded lattice. Once again set $A_1 = A$ and denote the bounds of
$A$ by $0_A$ and $1_A$. We define the bisemilattice $A_{\infty} = A \cup \{\infty\}$ as
follows.  For an $a \in A$, $a\cdot \infty = 0_A$ and $a+\infty = 1_A$.  It is easy to see
that this yields a bisemilattice, and is a semilattice sum of the lattices $A_1$ and $A_0$
over the meet semilattice $\{0, 1\}$ with $1 < 0$. A somewhat more complicated exercise
shows that the operation $t(x,y) = x + xy$ is a pseudo-partition operation on the algebra
$A_{\infty}$, but it is not a partition operation. It follows that each variety
$\mathsf{V}(A_{\infty})$ is contained in the pseudo-regularization of $\mc{L}$ but is
different from the regularization of the variety $\mathsf{V}(A)$. We do not know if the
varieties $\mathsf{V}(A_{\infty})$ and $\overline{\mathsf{V}(A)}$ coincide.
\end{example}

\begin{example}[\textbf{Groups}]
  Groups may be defined as algebras $(G, \cdot, ^{-1})$ satisfying the identities
  $xy \cdot z = x \cdot yz$, $y^{-1}y \cdot x = x \cdot y y^{-1}$, and
  $x \cdot y y^{-1} = x$. They form a strongly irregular variety with
  $t(x,y) = x \cdot y y^{-1}$, which will be denoted by $\mc{GP}$. The regularization
  $\wt{\mc{GP}}$ of $\mc{GP}$ may be defined by the first two identities defining $\mc{GP}$ and
  the identities $(xy)^{-1} = y^{-1} x^{-1}, \, (x^{-1})^{-1} = x$ and $xxx^{-1} =
  x$. (See e.g. \cite{P69} and \cite{PR92}.) The variety $\wt{\mc{GP}}$ consists of P\l
  onka sums of groups. In semigroup theory, P\l onka sums of groups are called
  \emph{strong semilattices of groups}. By results of Clifford~\cite{C41}, it is known
  that the class of P\l onka sums of groups coincides with the class of semigroups which
  are semilattices of groups and with the class of so-called \emph{Clifford semigroups},
  semigroups with a unary operation $^{-1}$ satisfying the identities
  $(x^{-1})^{-1} = x, xx^{-1}x = x, x x^{-1} = x^{-1} x$ and
  $(x x^{-1}) (y y^{-1}) = (y y^{-1}) (x x^{-1})$.  (See also \cite{CP61}, \cite{H76} and
  \cite{H95}.)  The class $\wt{\mc{GP}}$ is a subvariety of the variety $\mc{IS}$ of
  inverse semigroups, semigroups with one unary operation $^{-1}$ satisfying the last
  three identities defining $\wt{\mc{GP}}$. Members of $\wt{\mc{GP}}$ are precisely inverse
  semigroups which are P\l onka sums of groups (or as semigroup theorists would say,
  strong semilattices of groups). (See e.g. \cite{PR92} and references there.) In fact,
  \begin{equation*}
\wt{\mc{GP}} = \mc{IS} \cap (\mc{GP} \circ \mcS).
\end{equation*}
Groups defined as algebras with two basic operations $\cdot$ and $^{-1}$ as above do not
contain any nullary operations in the similarity type. Hence Theorem~\ref{T:varqvar} applies, and the
Mal'tsev product $\mc{GP} \circ \mcS$ is the variety $\pro{\mc{GP}}$. Note that
$\pro{\mc{GP}}$ is a variety of groupoids with one unary operation, but it is not a variety
of semigroups. The pseudo-regularization of $\mc{GP}$ has not yet been investigated.
\end{example}

The assumption that the type of a variety $\mcV$, in a Mal'tsev product $\mcV \circ \mcS$,
has no nullary operation symbols is due to the fact that semilattices have no nullary
operations. However, if $\mcV$ is a variety with constant (nullary) operations, then the
latter assumption may easily be omitted, as it was done in the example above. Instead of a
constant operation one may take a unary operation defining this constant, and consider an
equivalent variety $\mcV'$ with such a unary operation replacing each constant
operation. Then if $\mcV$ is strongly irregular, Theorem \ref{T:varqvar} applies, and the
Mal'tsev product $\mc{V'} \circ \mcS$ is a variety.

If $\mcV$ is an irregular, but not strongly irregular variety,
then the class of P\l onka sums of $\mcV$-algebras does not necessarily coincide with the
regularization $\wt{\mcV}$ of $\mcV$. (See \cite[Ch.~4]{RS02} and \cite{PR92} for more
information and some references.) The following example concerns the case of semigroups.

\begin{example}[\textbf{Semigroups}]
  It was shown by V. N. Sali\v{\i} \cite{VS69, VS71} that each semigroup in the
  regularization $\wt{\mcV}$ of an irregular variety $\mcV$ of semigroups embeds into a
  P\l onka sum of $\mcV$-algebras. (This however is not true for general algebras, see
  \cite{GKW86} and \cite{R86}.) Hence in the case of semigroups the
  regularization $\wt{\mcV}$ is contained in the Mal'tsev product $\mcV \circ
  \mcS$. However not all of its members are P\l onka sums of $\mcV$-algebras.
\end{example}

\begin{example}
  This example comes from the geometry of affine spaces and convex sets. (See \cite{RS02}
  and references there.) Affine spaces over a subfield $F$ of the field $\mb{R}$ of reals
  (affine $F$-spaces) may be defined as algebras $(A, \underline{F})$, where
  $\underline{F}$ is the set of binary operations
  $xy\underline{p} = \underline{p}(x,y) = x(1-p) + yp$, for all $p \in F$. Affine $F$-spaces
  defined in this way form a variety. Convex subsets of affine $F$-spaces (convex
  $F$-sets) may be defined as algebras $(C, \underline{I}^o)$, where
  $\underline{I}^o = \bigl\{\underline{p} \mid p \in I^o = ]0,1[\bigr\}$ and $]0,1[$
  is the open unit interval of $F$. Fix a subfield $F$ of $\mb{R}$. Then the convex
  $F$-subsets of affine $F$-spaces generate the variety $\mc{BA}$ of \emph{barycentric
    algebras}. Convex $F$-sets form a subquasivariety $\mc{C}$ of $\mc{BA}$.  The variety
  $\mc{BA}$ is defined by the following identities
\begin{align*}
& x x \underline{p} = x, \\
& x y\, \underline{p} = y x\, \underline{1-p}, \\
& x y\, \underline{p}\ z\, \underline{q} = x\,\, y z\,
\underline{q/(p\circ q)}\,\, \underline{p\circ q}
\end{align*}
for all $p, q$ in $I^{o}$. Here $p\,
\circ\, q = p+q-pq$.
(See \cite[\S5.8]{RS02}.)
Its subquasivariety $\mc{C}$ is defined by the cancellation laws
\begin{equation}\label{E:cl}
(xy\underline{p} = xz\underline{p}) \rightarrow (y = z),
\end{equation}
which hold for all $p \in I^{o}$.  It is known \cite[\S7.5]{RS02} that each barycentric
algebra is a semilattice sum of (open) convex sets. This shows that $\mc{BA}$ is contained
in the Mal'tsev product $\mc{C} \circ \mcS$:
\begin{equation*}
\mc{BA} \subset \mc{C} \circ \mcS \subset \pro{\mc{BA}}. 
\end{equation*}
Since the quasivariety $\mc{C}$ satisfies linear identities, the varieties $\mc{BA}$ and
$\pro{\mc{BA}}$ are distinct.

In fact, a stronger result was proved, which we provide here in a somewhat reformulated way.

\begin{theorem}\cite[\S 7.5]{RS02}
Each barycentric algebra is a semilattice sum of open convex sets over its semilattice
replica, and is a subalgebra of a P\l onka sum of convex sets over its semilattice
replica.
\end{theorem}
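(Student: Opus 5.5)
The theorem has two assertions, and I will treat them separately. The first says every barycentric algebra $A$ is a semilattice sum of open convex sets over its semilattice replica. The second says $A$ embeds into a P\l onka sum of convex sets over that same replica.

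\emph{Step 1: the decomposition.} Let $\varrho$ be the semilattice replica congruence of $A$. I will describe it concretely: $a \mathrel{\varrho} b$ iff each of $a,b$ lies in an open segment with an endpoint at the other. Equivalently, $a = b c\,\underline{p}$ for some $c \in A$ and $p\in I^o$, and symmetrically with $a,b$ exchanged.

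The checks proceed in this order:
\begin{itemize}
\item This relation is an equivalence and a congruence. This is a computation using the entropic and associative-type identities of $\mc{BA}$.
\item The quotient $A/\varrho$ satisfies $xy\underline{p} = xy\underline{q}$ and is a semilattice.
\item Every congruence with a semilattice quotient contains the relation, so it really is the replica congruence.
\end{itemize}

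Next I show each class $A_s$ satisfies the cancellation law \eqref{E:cl}, so that $A_s\in\mc C$. Suppose $xy\underline{p}=xz\underline{p}$ with $x,y,z\in A_s$. Because $y$ and $z$ are interior to one another's segments, I can extend them slightly beyond themselves within $A_s$ and then invert the operation $\underline{p}$. This gives $y=z$.

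Openness of $A_s$ (every point is interior) comes directly from the definition of the classes: every $b \in A_s$ is interior to a segment $[a,c]$ with $a,c\in A_s$. This step, showing that the classes are cancellative, is the main obstacle. I would first try to reduce it to the known fact that a barycentric algebra with no proper "walls" (no proper sink/ideal structure inside) is cancellative. If that fails, I would use the embedding of Step 2 to transfer cancellativity from the target.

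\emph{Step 2: the embedding.} For each $s$ in $S = A/\varrho$, let $\overline{A_s}$ be the affine hull of the convex set $A_s$; it is an affine space, hence convex. For $r\ge s$, define $\varphi_{r,s}\colon A_r\to \overline{A_s}$ by $a\mapsto$ the "limit" $a\, c\,\underline{1}$ for any $c\in A_s$. Concretely, take $\varphi_{r,s}(a)=\lim_{p\to 0}$ of $a c\,\underline{p}$ computed in $\overline{A_s}$; this is well defined because $ac\underline{p}\in A_s$ for all $p\in I^o$.

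The map $\varphi_{r,s}$ does not depend on $c$, is a homomorphism, and satisfies the functoriality $\varphi_{r,s}\varphi_{s,q}=\varphi_{r,q}$. Extending each $\varphi_{r,s}$ affinely to the hulls yields a P\l onka sum $\sum_{s\in S}\overline{A_s}$ of convex sets, by the construction \eqref{eq:1}.

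The inclusion $A\hookrightarrow\sum\overline{A_s}$ is injective, since it is injective on each class. It remains to verify it is a homomorphism. Take $a\in A_r$, $b\in A_s$, and compare $ab\underline{p}\in A_{rs}$ with $\varphi_{r,rs}(a)\varphi_{s,rs}(b)\underline{p}$. Both are obtained as limits of points of $A_{rs}$, and the entropic law lets me move the limits through $\underline{p}$. The delicate point is again the limit construction, i.e.\ the use of $\mb R$-completeness in the hull, together with checking that these limits coincide.
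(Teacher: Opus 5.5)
\paragraph{A genuine gap: cancellativity of the classes is never proved.} The paper gives no proof of this statement; it quotes it from \cite[\S 7.5]{RS02}. Your argument therefore has to stand on its own, and it does not. The decisive step, that each $\varrho$-class $A_s$ satisfies the cancellation law \eqref{E:cl}, is never established.

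Several parts of Step 1 are fine. Your description of $\varrho$ is correct: it lies in every congruence with semilattice quotient, and the entropic and associativity laws show that it is itself such a congruence. Openness of the classes also follows, once you replace the witness $c$ in $b=ac\underline{p}$ (which may lie in a higher class) by $bc\underline{q}\in A_s$.

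The cancellation step, however, fails in each of the three forms you give:
\begin{itemize}
\item ``Extend $y,z$ slightly beyond themselves and invert $\underline{p}$'' is not an argument. Inverting $\underline{p}$ \emph{is} cancellation, and without cancellation there is no affine structure in which to extrapolate.
\item Reducing to ``a barycentric algebra with no proper walls is cancellative'' quotes the key lemma of the theorem rather than proving it.
\item The fallback is circular. Step 2 takes $\overline{A_s}$ to be the affine hull of ``the convex set $A_s$''. That hull exists only if $A_s$ is already known to embed in an affine space, i.e.\ (cancellative barycentric algebras being exactly convex sets) to be cancellative.
\end{itemize}
Since Step 2 depends on Step 1 in this way, neither assertion of the theorem is actually proved. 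The whole weight rests on the missing lemma.

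\paragraph{Step 2 needs repair even granting cancellativity.} Knowing that $ac\underline{p}\in A_s$ for all $p$ says nothing about the existence of $\lim_{p\to 0}ac\underline{p}$, nor about its independence of $c$. Moreover $F$ may be a proper subfield of $\mb R$, so no completeness is available. (Also, with $xy\underline{p}=x(1-p)+yp$, the expression $ac\underline{1}$ is just $c$, not the point you intend.)

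An algebraic substitute works over any $F$:
\begin{itemize}
\item For $a\in A_r$ with $r\ge s$, the map $c\mapsto ac\underline{p}$ is an endomorphism of $A_s$, by entropicity and idempotence. Hence it is the restriction of an affine map of $\overline{A_s}$.
\item The law $(ac\underline{p})c'\underline{q}=a\,(cc'\underline{q/(p\circ q)})\,\underline{p\circ q}$ forces the linear part of this map to be $p\cdot\mathrm{id}$. It also forces the constant part to be $(1-p)v$ for a single $v\in\overline{A_s}$, independent of $p$ and $c$.
\item So $ac\underline{p}=(1-p)v+pc$, and you may set $\varphi_{r,s}(a)=v$.
\end{itemize}
The homomorphism property, functoriality, and $ab\underline{p}=\varphi_{r,rs}(a)\varphi_{s,rs}(b)\underline{p}$ all then follow the same way. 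Expand identities such as $(ab\underline{p})c\underline{q}=(ac\underline{q})(bc\underline{q})\underline{p}$ in the appropriate affine hull and cancel the common factor $1-q$.

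With these repairs, the affine-hull P\l onka sum is a valid and fairly direct route to the second assertion. It still presupposes the cancellativity lemma you have not proved.
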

\noindent The summands of the P\l onka sum are certain canonical extensions of the
summands of the semilattice sum.

\end{example}

\section{Concluding remarks and the problem of representation of algebras in Mal'tsev products}

The fact that an algebra $A$ is a semilattice sum of $\mcV_t$-subalgebras says very little
about its detailed structure. An exception is of course given by algebras represented as
P\l onka sums. (The second best representation is given by subalgebras of P\l onka sums.)
So it is natural to ask how the summands of a semilattice sum are put together or how to
reconstruct the algebraic structure of a semilattice sum $A = \bigsqcup_{s \in S} A_s$
from its $\mcV_t$-summands $A_s$ and the quotient semilattice $S$. Such a construction
exists and was introduced for general algebras in \cite[\S6.2]{RS85}, under the name of a
Lallement sum, as a generalization of a P\l onka sum. (See also \cite{L67} for a basic
construction introduced in the case of semigroups, and \cite{R86}, \cite{PR92} and
\cite[Ch.~4]{RS02} for similar constructions in the case of general algebras.) The primary
idea was to relax the \mbox{requirement} of functoriality in the definition of P\l onka
sums. There are several types of such constructions and the definition may be formulated
for algebras of any plural type. However, to avoid excessive notation we will consider
here only plural type with operations of arity two, and will limit ourself to the
following definition.

\begin{definition}\label{D:LS}
Let $(S, \cdot)$ be a (meet) semilattice. (Here all operations of $\Om$ are equal to
$\cdot$). For each $s \in S$, let a $\mcV_t$-algebra $A_s$ of a plural binary type
$\tau\colon \Om \rightarrow \{2\}$ be given, and for each operation $\star \in \Om$ an
extension $(E_{s}^{\,\star},\star)$ of $(A_{s},\star)$. For each pair $t \leq s$ of $S$,
let 
\begin{equation*}
\varphi_{s,t}^{\,\star}\colon (A_s, \star) \rightarrow (E_{t}^{\,\star},\star)
\end{equation*}
be a $\star$-homomorphism
such that the following three conditions are satisfied:
\begin{enumerate}
\item $\varphi_{s,s}^\star$ is the embedding of $A_s$ into $E_{s}^{\star}$;
\item $\varphi_{s,s\cdot t}^{\star}(A_s) \star \varphi_{t,s\cdot t}^{\star}(A_t) \subseteq A_{s\cdot t}$;
\item for each $u \leq s\cdot t$ in $S$ and $a_s \in A_s,\, b_t \in A_t$
  \begin{equation*}
\varphi_{s\cdot t,u}^\star (\varphi_{s,s\cdot t}^\star (a_s) \star \varphi_{t,s\cdot
  t}^\star (b_t)) = \varphi_{s,u}^\star(a_s) \star \varphi_{t,u}^\star(b_t). 
\end{equation*}
\end{enumerate}

\noindent An $\Om$-algebra structure on the disjoint sum $A$ of all $A_s$ is given by defining the operations $\star$ in $\Om$ as follows:
\begin{equation*}
a_s \star b_t = \varphi_{s,s\cdot t}^{\,\star}(a_s) \star \varphi_{t,s\cdot t}^{\,\star}(b_t).
\end{equation*}
Then all $A_s$ are subalgebras of $A$, and $S$ is a quotient of $A$. The semilattice sum
$A$ of $A_s$ is said to be the \emph{semilattice sum of $A_s$ by the mappings
  $\varphi_{s,t}^{\star}$}. If additionally, for each $t \in S$, one has
$E_{t}^\star = \{\varphi_{s,t}^{\star}(a_s) \mid s \geq t,\, a_s \in A_s\}$, and all
$E_{s}^{\star}$ are certain canonical extensions of $A_s$ (see \cite[\S~6.1]{RS85}), then
this semilattice sum is called a \emph{Lallement sum}.
\end{definition}

By \cite[Th.~624]{RS85} each semilattice sum of $\mcV_t$-algebras can be reconstructed as
a Lallement sum of these algebras. The usefulness of Lallement sums depends on properties
of the available extensions $E_{s}^\star$. In particular, a nice situation appears if for
each $s \in S$, all extensions $E_s^{\star}$ coincide with the summand $A_s$. Such
Lallement sums are called \emph{strict}.

We next consider a special case of strict Lallement sums of $\mcV_t$-algebras, which
extends a construction described for Birkhoff systems in \cite{HR17b}.  This construction
works nicely for $\mcV_t$-algebras where each operation $\star \in \Om$ has a one-sided
unit. In what follows we assume that all one-sided units are right-sided and call them
simply units. We will call such algebras briefly \emph{$\mcV_t$-algebras with units}.

Let $A$ be a semilattice sum $\bigsqcup_{s\in S} A_s$ of $\mcV_t$-algebras $A_s$ with
units, and let $e_{s}^{\star}$ be the (right-)unit element of $\star$ in
$A_s$. Additionally assume that for all $s, t, u \in S$ with $u \leq s, t$
\begin{equation}\label{E:const}
(a_s \star b_t) \star e_{u}^{\star} = (a_s \star e_{u}^{\star}) \star (b_t \star e_{u}^{\star}),
\end{equation}
and for $t\leq s$ in $S$ define the maps
\begin{equation*}
\varphi_{s,t}^{\star} \colon A_s \rightarrow A_t;\  a_s \mapsto a_s \star e_{t}^{\star}.
\end{equation*}
Note that $t\leq s$ in $S$ means $s\cdot t = t$, so these maps are well
defined. It is easily seen that each $\varphi_{s,t}^{\star}$ is a $\star$-homomorphism
from $A_s$ into $A_t$. Moreover, by \eqref{E:const}, 
\begin{equation*}
a_s \star b_t = (a_s \star b_t) \star e_{st}^{\star} = (a_s \star e_{t}^{\star}) \star (b_s \star e_{t}^{\star}) =
\varphi_{s,s\cdot t}^{\star}(a_s) \star \varphi_{t,s\cdot t}^{\star}(b_t).
\end{equation*}
Then for each $u \leq s\cdot t$ in $S$
\begin{align*}
&\varphi_{s\cdot t,u}^\star (\varphi_{s,s\cdot t}^\star (a_s) \, \star \, \varphi_{t,s\cdot t}^\star (b_t)) =
\varphi_{s\cdot t,u}^\star (a_s \, \star \,  b_t) =\\
 &(a_s \star b_t) \star e_{u}^{\star} = (a_s \star e_{u}^{\star}) \star (b_t \star e_{u}^{\star}) =
\varphi_{s,u}^\star(a_s) \star \varphi_{t,u}^\star(b_t).
\end{align*}
It follows that the maps $\varphi_{s,t}^{\star}$ satisfy the requirements of a strict
Lallement sum.  One obtains the following theorem, a corollary of Theorem~624 of
\cite{RS85}, and an extension of Theorem $4.15$ of \cite{HR17b}.

\begin{theorem}\label{T:LS}
  Let $A$ be a semilattice sum of $\mcV_t$-algebras $A_s$ with units of a plural binary
  type. Then $A = \bigsqcup_{s \in S} A_s$ satisfies the condition \eqref{E:const} if and
  only if it is a strict Lallement sum of the subalgebras $A_s$ over the semilattice $S$
  given by the homomorphisms $\varphi_{s,t}^{\star}$ described above.
\end{theorem}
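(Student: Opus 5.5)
We prove the two directions separately. In both, the key observation is that for $a_s\in A_s$, $b_t\in A_t$ the product $a_s\star b_t$ lies in $A_{s\cdot t}$, because the semilattice replica congruence is compatible with $\star$. Together with the right-unit property, this gives $a_s\star b_t=(a_s\star b_t)\star e^{\star}_{s\cdot t}$.

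\emph{Sufficiency of \eqref{E:const}.} Assume \eqref{E:const} holds. This is essentially the computation displayed just before the statement; we organize it into a verification of the three conditions of Definition~\ref{D:LS}, with $E^{\star}_s=A_s$ throughout.
\begin{enumerate}
\item For $t=s$, the unit property gives $\varphi^{\star}_{s,s}=\mathrm{id}_{A_s}$. This is condition (1).
\item Each $\varphi^{\star}_{s,t}$ is a $\star$-homomorphism $A_s\to A_t$. For $a,b\in A_s$ apply \eqref{E:const} with $u=t\le s$ to get $(a\star b)\star e^\star_t=(a\star e^\star_t)\star(b\star e^\star_t)$. The image lands in $A_t$ since $s\cdot t=t$.
\item Condition (2) holds because the images lie in $A_{s\cdot t}$, which is a subalgebra.
\item Condition (3) follows from \eqref{E:const}, taking $u\le s\cdot t$ and using the rewriting $a_s\star b_t=\varphi^\star_{s,s\cdot t}(a_s)\star\varphi^\star_{t,s\cdot t}(b_t)$. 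That rewriting is itself \eqref{E:const} with $u=s\cdot t$, applied to $a_s\star b_t=(a_s\star b_t)\star e^\star_{s\cdot t}$.
\end{enumerate}
Finally, the operation defined in Definition~\ref{D:LS} from these maps agrees with the original operation of $A$. So $A$ is the strict Lallement sum by the maps $\varphi^\star_{s,t}$.

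\emph{Necessity of \eqref{E:const}.} Conversely, suppose $A$ is the strict Lallement sum by the maps $\varphi^\star_{s,t}\colon a_s\mapsto a_s\star e^\star_t$. Fix $u\le s,t$; then $u\le s\cdot t$. Since $a_s\star b_t\in A_{s\cdot t}$ and $u\le s\cdot t$, the Lallement multiplication rule gives
\[
(a_s\star b_t)\star e^\star_u=\varphi^\star_{s\cdot t,u}(a_s\star b_t)\star\varphi^\star_{u,u}(e^\star_u).
\]
By condition (1), $\varphi^\star_{u,u}(e^\star_u)=e^\star_u$, and $\varphi^\star_{s\cdot t,u}(a_s\star b_t)\in A_u$. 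Because $e^\star_u$ is a right unit in $A_u$, the right-hand side equals $\varphi^\star_{s\cdot t,u}(a_s\star b_t)$. Now write $a_s\star b_t=\varphi^\star_{s,s\cdot t}(a_s)\star\varphi^\star_{t,s\cdot t}(b_t)$ and apply condition (3). This gives $\varphi^\star_{s,u}(a_s)\star\varphi^\star_{t,u}(b_t)=(a_s\star e^\star_u)\star(b_t\star e^\star_u)$, which is exactly \eqref{E:const}.

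\emph{Main obstacle.} The delicate point is the bookkeeping that each intermediate product lies in the correct summand, because the unit $e^\star_u$ acts as a unit only inside $A_u$. We check this using the fact that $A/\varrho\cong S$ is a semilattice, so $A_s\star A_t\subseteq A_{s\cdot t}$. We also confirm that the homomorphism property of the $\varphi^\star_{s,t}$ is not an extra assumption but follows from \eqref{E:const} in the case $s=t$.
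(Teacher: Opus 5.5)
Your proof is correct, and the sufficiency half is exactly the computation the paper displays just before the theorem. You have the indices right: your rewriting uses $e^{\star}_{s\cdot t}$ and $b_t$, whereas the paper's display has a slip there. The converse is left implicit in the paper, which presents the theorem as a corollary of \cite[Th.~624]{RS85}; you supply it directly by observing that condition (3) for the maps $a_s\mapsto a_s\star e^{\star}_t$, together with the Lallement multiplication rule, is literally \eqref{E:const}. Your detour through $\varphi^{\star}_{u,u}(e^{\star}_u)$ is harmless but unnecessary, since $(a_s\star b_t)\star e^{\star}_u=\varphi^{\star}_{s\cdot t,u}(a_s\star b_t)$ holds by definition.
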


Examples are provided by Birkhoff systems which are semilattices of bounded lattices, and
more general bisemillatices which are semilattice sums of bounded lattices. In particular,
members of the pseudo-regularizations of the variety $\mc{L}$ of lattices provided above
are of this type.

Note also that the condition \eqref{E:const} holds for all algebras $A$ of plural binary
type with self-entropic operations, i.e.,  satisfying $(x \star y) \star (z \star t)
=  (x \star z) \star (y \star t)$, for each $\star \in \Om$.

One more final remark concerns the assumption about strong irregularity of the summands
$A_s$. In this paper, we were interested in semilattices sums of algebras in a strongly
irregular variety. However, both Definition \ref{D:LS} and Theorem \ref{T:LS} may be
extended easily to semilattice sums of algebras in any variety.

\bigskip
\noindent\textbf{Remark.} The authors thank the referee for suggesting the term
`prolongation' as used in this paper.


\begin{thebibliography}{99}

\bibitem{B18}
Bergman, C.:
Notes on Quasivarieties and Mal'tsev Products.
\url{https://faculty.sites.iastate.edu/cbergman/files/inline-files/maltsevprods.pdf}

\bibitem{CB12}
Bergman, C.:
Universal algebra. Fundamentals and selected topics.
CRC Press, Boca Raton (2012)

\bibitem{BF15}
Bergman, C., Failing D.:
Commutative idempotent groupoids and the constraint satisfaction problem.
Algebra Universalis \textbf{73}, 391--417 (2015)


\bibitem{BR96}
Bergman, C., Romanowska, A.:
Subquasivarieties of regularized varieties.
Algebra Universalis \textbf{36}, 536--563 (1996)


\bibitem{C41}
Clifford, A.H.:
Semigroups admitting relative inverses.
Ann. Math. \textbf{42}, 1037--1049 (1941)

\bibitem{CP61}
Clifford, A.H., Preston, G.B.:
The Algebraic Theory of Semigroups.
Amer. Math. Soc., Providence (1961)

\bibitem{GKW86}
Graczy\'{n}ska, E., Kelly, D., Winkler, P.:
On the regular part of varieties of algebras.
Algebra Universalis \textbf{23}, 77--84 (1986)


\bibitem{HR17a}
Harding, J., Romanowska, A.:
Varieties of Birkhoff systems, Part I.
Order \textbf{34}, 45--68 (2017)

\bibitem{HR17b}
Harding, J., Romanowska, A.:
Varieties of Birkhoff systems, Part II.
Order \textbf{34}, 69--89 (2017)

\bibitem{HM88}
Hobby, D., McKenzie, R.:
The structure of finite algebras. Contemporary Mathematics, vol. 76.  
 American Mathematical Society, Providence  (1988)


\bibitem{H76}
Howie, J.M.:
An Introduction to Semigroup Theory.
Academic Press, London (1976)

\bibitem{H95}
Howie, J.M.:
Fundamentals of Semigroup Theory.
Clarendon Press, Oxford (1995)


\bibitem{I84}
Iskander, A.A.:
Extension of algebraic systems.
Trans. Amer. Math. Soc. \textbf{28}, 309--327 (1984)


\bibitem{L67}
Lallement, G.:
Demi-groupes r\' {e}guliers.
Ann. Mat. Pura Appl. \textbf{77}, 47--129 (1967)

\bibitem{M67}
Mal'tsev, A.I.:
Multiplication of classes of algebraic systems. 
Sibirsk. Mat. Zh. \textbf{8}, 346--365 (1967) (Russian).
English translation in: The Metamathematics of Algebraic Systems. Collected Papers: 1936--1967

\bibitem{M71}
Mal'tsev, A.I.:
The Metamathematics of Algebraic Systems. Collected Papers: 1936--1967.
Noth-Holland Publishing Co., Amsterdam (1971)

\bibitem{MMT87}
McKenzie, R., McNulty, G., Taylor, W.:
Algebras, lattices, varieties, volume 1.
Wadsworth and Brooks/Cole, Monterey (1987)

\bibitem{Mc54}
McLean, D.:
Idempotent semigroups.
American Mathematical Monthly \textbf{61}, 110--113 (1954)

\bibitem{P67}
P\l onka, J.:
On a method of construction of abstract algebras.
Fund. Math. \textbf{60}, 183--189 (1967)

\bibitem{P69}
P\l onka, J.:
On equational classes of abstract algebras defined by regular equations.
Fund. Math \textbf{64}, 241--247 (1969)


\bibitem{PR92}
P\l onka, J., Romanowska, A.:
Semilattice sums.
in: Romanowska, A., Smith, J.D.H. (eds.)
Universal Algebra and Quasigroup Theory, pp. 123--158.
Heldermann, Berlin (1992)

\bibitem{R86}
Romanowska, A.:
On regular and regularized varieties.
Algebra Universalis \textbf{23}, 215--241  (1986)

\bibitem{RS85}
Romanowska, A., Smith, J.D.H.:
Modal theory.
Heldermann Verlag, Berlin (1985)

\bibitem{RS02}
Romanowska, A., Smith, J.D.H.:
Modes.
World Scientific, Singapore (2002)

\bibitem{VS69}
Sali\v{\i}, V.N.:
Equationally normal varieties of semigroups. 
Izv. Vyssh. Uchebn. Zaved. Mat. \textbf{84}, 61--68  (1969) (Russian),

\bibitem{VS71}
Sali\v{\i}, V.N.:
A theorem on homomorphisms of strong semilattices of semigroups. 
In: Vagner, V.V. (ed.) Theory of Semigroups and Applications 2 pp. 69--74.
Izd. Saratov. Univ., Saratov (1971). (Russian)


\end{thebibliography}
\end{document}